\newtheorem{prop}{Proposition}[section]
\newtheorem{teo}{Theorem}[section]
\newtheorem{lema}{Lemma}[section]
\newtheorem{coro}{Corollary}[section]
\theoremstyle{definition}
\def\ep{\varepsilon}
\def\R{\mathbb R}
\def\N{\mathbb N}
\def\K{{\mathcal K}}
\def\D{{\mathcal D}}
\def\H{{\mathcal H}}
\def\U{{\mathcal U}}
\begin{document}
\title[Near field asymptotics for the PME in exterior domains. Critical case]{Near field asymptotics for the porous medium equation in exterior domains. The critical two-dimensional case}

\author[Cort\'{a}zar,  Quir\'{o}s \and Wolanski]{C. Cort\'{a}zar,  F. Quir\'{o}s \and N. Wolanski}

\address{Carmen Cort\'{a}zar\hfill\break\indent
Departamento  de Matem\'{a}tica, Pontificia Universidad Cat\'{o}lica
de Chile \hfill\break\indent Santiago, Chile.} \email{{\tt
ccortaza@mat.puc.cl} }

\address{Fernando Quir\'{o}s\hfill\break\indent
Departamento  de Matem\'{a}ticas, Universidad Aut\'{o}noma de Madrid
\hfill\break\indent 28049-Madrid, Spain.} \email{{\tt
fernando.quiros@uam.es} }

\address{Noem\'{\i} Wolanski \hfill\break\indent
Departamento  de Matem\'{a}tica, FCEyN,  UBA,
\hfill\break \indent and
IMAS, CONICET, \hfill\break\indent Ciudad Universitaria, Pab. I,\hfill\break\indent
(1428) Buenos Aires, Argentina.} \email{{\tt wolanski@dm.uba.ar} }

\thanks{C.\,Cort\'azar supported by  FONDECYT grant 1150028 (Chile). F.\,Quir\'os supported by
project MTM2014-53037-P (Spain). N.\,Wolanski supported by
CONICET PIP625, Res. 960/12, ANPCyT PICT-2012-0153, UBACYT X117 and MathAmSud 13MATH03 (Argentina).}

\keywords{Porous medium equation, exterior domain, asymptotic behavior,
matched asymptotics.}

\subjclass[2010]{%
35B40, 
35K65, 
35R35. 
}

\date{}

\begin{abstract}
We consider the porous medium equation in an exterior  two-dimensional domain which excludes a hole,  with zero Dirichlet data on  its boundary. Gilding and Goncerzewicz proved in~\cite{Gilding-Goncerzewicz-2007} that in the far field scale, $x=\xi t^{\frac1{2m}}/(\log t)^{\frac{m-1}{2m}}$, $\xi\ne 0$, solutions to this problem with an integrable and compactly supported initial data behave as an instantaneous point-source solution for the equation with a variable mass that decays to 0 in a precise way, determined by the initial data and the hole. However, their result does not say much about the behavior when $|x|=o\big(t^{\frac1{2m}}/(\log t)^{\frac{m-1}{2m}}\big)$, in the so called near field scale, except that the solution is $o\big((t\log t)^{-\frac1m}\big)$ there. In particular, it does not give a sharp decay rate, neither a nontrivial asymptotic profile, on compact sets.
In this paper we characterize the large time behavior in such scale, thus completing the results of~\cite{Gilding-Goncerzewicz-2007}.

\end{abstract}

\maketitle

\section{Introduction}
\label{sect-Introduction} \setcounter{equation}{0}

Let  $\mathcal{H}\subset \mathbb{R}^N$ be a non-empty bounded open set.
We do not assume $\H$ to be connected, so it may represent one or several holes in an otherwise homogeneous medium.  We assume, without loss of generality, that $0\in\mathcal{H}$. Our aim is to complete the description, started by Gilding and Goncerzewicz in~\cite{Gilding-Goncerzewicz-2007}, of the large time behavior of solutions to the Cauchy-Dirichlet problem
\begin{equation}\label{problem}
\tag{P}
\displaystyle\partial_t u=\Delta u^m \quad\mbox{in } (\R^N\setminus\overline\H)\times\R_+,\qquad
u=0\quad\mbox{in }\partial\mathcal{H}\times\R_+,\qquad
u(\cdot,0)=u_0\quad\mbox{in }\mathbb{R}^N\setminus\overline\H,
\end{equation}
with $m>1$, and nonnegative and compactly supported integrable initial data $u_0$, in the critical case $N=2$. A full study of the asymptotic behavior for spatial dimensions $N\ne2$ is already available \cite{Brandle-Quiros-Vazquez-2007,Cortazar-Quiros-Wolanski-preprint,Gilding-Goncerzewicz-2007,Kamin-Vazquez-1991}. This problem, which models the flow of a fluid in a porous medium, has a unique weak solution; see, for instance, the monograph~\cite{Vazquez-book}.

\medskip

\noindent\emph{Notation. } For the sake of brevity we denote $\Omega=\mathbb{R}^N\setminus\overline\H$.

\medskip

\noindent \textsc{The Cauchy problem. }
In the absence of holes, $\H=\emptyset$, the  mass $M(t)=\int_{\mathbb{R}^N}u(\cdot,t)$  of a solution to~\eqref{problem} is conserved, no matter the spatial dimension, $M(t)=\int_{\mathbb{R}^N}u_0$ for all $t\ge0$. Moreover, as proved in~\cite{Friedman-Kamin-1980}, the solution behaves for large times as the instantaneous point-source solution  of the equation with mass $M=\int_{\mathbb{R}^N}u_0$, that we denote by $\U(x,t;M)$, in the following precise sense,
\begin{equation}
\label{eq:convergence.Cauchy.problem}
\lim_{t\to\infty}t^{\frac{N}{N(m-1)+2}}\|u(\cdot,t)-\U\big(\cdot,t;M\big)\|_{L^\infty(\mathbb{R}^N)}=0,\qquad M=\int_{\mathbb{R}^N}u_0;
\end{equation}
see also~\cite{Vazquez-2003} and the references therein. This result does not require the initial data to be compactly supported.
The special solution $\U(x,t;M)$, which was discovered by Zel'dovi$\check{\rm c}$ and Kompaneets~\cite{Zeldovic-Kompaneec-1950} in dimensions one and three, and by Barenblatt~\cite{Barenblatt-1952} and Pattle~\cite{Pattle-1959} for arbitrary dimensions, has $M\delta$  (where $\delta$ is the Dirac distribution) as initial data.  It has a selfsimilar form,
\begin{equation}\label{eq-U}
\U(x,t;M)=t^{-\alpha}F_M(\xi),\quad \xi=\frac{x}{t^\beta},\qquad\beta=\frac{1}{N(m-1)+2},\quad \alpha=N\beta,
\end{equation}
with a profile
\begin{equation}
\label{eq:profile}
\left\{
\begin{array}{l}
F_M(\xi)=\Big(\frac{(m-1)\beta}{2m}\Big)^{\frac1{m-1}}
(\xi_M^2-|\xi|^2)_+^{\frac1{m-1}},\\[8pt] \xi_M=\left(
\frac{\Gamma\left(\frac{1}{2(m-1)\beta}\right)}{4m\pi^{\frac N2}\Gamma\left(\frac{m}{m-1}\right)}\right)^{(m-1)\beta}\left(\frac{2m}{(m-1)\beta}\right)^{m\beta}M^{(m-1)\beta}.
\end{array}
\right.
\end{equation}
Notice that, for all times, $\U(\cdot,t;M)$ has a compact support, namely $\{x\in\mathbb{R}^N: |x|\le \xi_M t^\beta\}$. This property, known as finite speed of propagation, is shared by all solutions having an integrable initial data with compact support.

Let us remark that the scaled variable
$$
w(\xi,\tau)=t^{\alpha}u(\xi t^\beta,t), \qquad t=\textrm{e}^{\tau},
$$
satisfies the non-linear Fokker-Planck type equation
\begin{equation}
\label{eq:Fokker-Planck}
\partial_\tau w(\xi,\tau)=\Delta w^m(\xi,\tau)+\beta \nabla\cdot\big(\xi w(\xi,\tau)\big),
\end{equation}
whose stationary (integrable) states are precisely the profiles $F_M$.
When written in terms of this scaled variable, the convergence result~\eqref{eq:convergence.Cauchy.problem} just says that $w$ converges uniformly as $t\to\infty$ ($\tau\to\infty$) towards a stationary state of~\eqref{eq:Fokker-Planck}, which one being dictated by the conservation of mass.

\medskip

\noindent\textsc{A conservation law.  }
In the presence of holes, solutions of~\eqref{problem} do not conserve mass. However, we still have an invariant of the evolution, namely
\begin{equation}
\label{eq:conservation.law}
M_\phi(t):=\int_{\Omega}u(\cdot,t)\phi=\underbrace{\int_{\Omega}u_0\phi}_{M_\phi^*}\quad\text{for all }t>0,
\end{equation}
where $\phi\in C(\overline\Omega)\cap C^2(\Omega)$ satisfies
\begin{equation}
\label{eq:stationary.general}
\Delta\phi=0\quad\text{in }\Omega,\qquad\phi=0\quad\text{on }\partial\Omega,
\end{equation}
plus some prescribed behavior at infinity;
see~\cite{King-1991} for a first proof of this fact in the radial case, and also~\cite{Brandle-Quiros-Vazquez-2007} and~\cite{Gilding-Goncerzewicz-2007}.
In order to have a unique nontrivial solution to problem~\eqref{eq:stationary.general}, not all behaviors at infinity are allowed. Which ones are possible depend on $N$. This will lead to different asymptotic behaviors for solutions to problem~\eqref{problem} depending precisely on the dimension.

\medskip

\noindent\textsc{The problem in high dimensions. } When $N\ge3$, in order to have a nontrivial solution to~\eqref{eq:stationary.general} we have to ask $\phi(x)$ to approach a constant as $|x|\to\infty$. This constant is chosen to be equal to one for simplicity.   With this choice for $\phi$, the conservation law~\eqref{eq:conservation.law} yields that the  mass $M(t):=\int_\Omega u(\cdot,t)$ converges to the nontrivial value $M_\phi^*$. Moreover, as proved in~\cite{Brandle-Quiros-Vazquez-2007}, the asymptotic behavior is given by
$$
\lim_{t\to\infty}t^{\frac{N}{N(m-1)+2}}\|u(\cdot,t)-\phi^{\frac1m}\U\big(\cdot,t;M_\phi^*\big)\|_{L^\infty(\mathbb{R}^N)}=0;
$$
see also~\cite{Herraiz-1999} for the linear case, $m=1$. Thus, in the  \emph{far field scale},  $x=\xi t^\beta$, $\xi\ne0$,  we have convergence towards the instantaneous point-source solution with mass $M_\phi^*$. More precisely, the \emph{outer behavior} is given by
$$
t^{\alpha}u(\xi t^\beta,t)\to F_{M_\phi^*}(\xi)\quad\text{uniformly for } 0<\xi_1\le |\xi|\le\xi_2<\infty,
$$
with $\alpha$ and $\beta$ as in~\eqref{eq-U};
see also~\cite{Gilding-Goncerzewicz-2007}.  On the other hand, the behavior in inner regions, $|x|=o(t^\beta)$, is given in terms of the stationary solution,
$$
t^{\alpha}u(x,t)\to F_{M_\phi^*}(0)\phi^{\frac1m}(x)\quad\text{uniformly on compact sets}.
$$
Note that the two developments overlap in a wide region,
$$
x=\xi g(t), \quad\xi\ne0, \quad g(t)\to\infty, \quad g(t)=o(t^\beta),
$$
where $u(x,t)\to F_{M_\phi^*}(0)$. In fact, the coefficient multiplying $\phi^{\frac1m}$ in the inner region is obtained from the outer limit by \emph{matching} both expansions in the overlapping region.

Let us remark that the rate of decay coincides, both in the inner and in the outer region, with that of the Cauchy problem. This is also true for the expansion rate; see the outer behavior. This is more clearly seen when studying the long time behavior of the support. Indeed, if
\[\begin{aligned}
&\zeta_+(t)=\sup\{\rho>0: u(x,t)>0\ \mbox{for some }x\in\overline\Omega, \ |x|=\rho\},\\
&\zeta_-(t)=\inf\{\rho>0:u(x,t)=0\ \mbox{for some }x\in\Omega,\ |x|=\rho\},
\end{aligned}
\]
then, as proved in~\cite{Brandle-Quiros-Vazquez-2007},
$
\zeta_\pm(t)/t^{\beta}\to \xi_{M_\phi^*}$,
where $\xi_M$ has the same meaning as in~\eqref{eq:profile}. An analogous result holds for the Cauchy problem; see for instance~\cite{Vazquez-2003}. This means in particular that $u(x,t)$ is identically zero  for  $|x|>\xi_M t^\beta$ and large times. This includes the \emph{very far field} scale, $|x|=t^\beta g(t)$, $g(t)\to\infty$.



\medskip

\noindent\textsc{The problem in the half-line. } In dimension one, if there is a hole the mass goes to zero, $M(t)=O(t^{-1/(2m)})$. Hence, we expect decay and expansion rates different from that of the Cauchy problem.

In this one-dimensional case, a hole disconnects the domain in several  components, two of them unbounded. After a translation and maybe a reflection, the problem in these unbounded components can be transformed into the problem in the half-line $\mathbb{R}_+$.  Nontrivial  harmonic functions on the half-line with zero boundary value are  multiples of $x$.  Therefore,  the conservation law means in this case that  solutions have a constant first moment, which indicates what is the right scaling to study the large time behavior: one that preserves that quantity. Thus,  solutions approach a so-called \emph{dipole} solution $\mathcal{D}$ to the equation with the same first moment as $u_0$. More precisely, as proved in~\cite{Kamin-Vazquez-1991},
\begin{equation}
\label{eq:conv.KV}
\lim_{t\to\infty}t^{\frac1m}\|u(\cdot,t)-\mathcal{D}(\cdot,t;M_\phi^*)\|_{L^\infty(\mathbb{R}_+)}=0,\qquad M_\phi^*=\int_0^\infty xu_0(x)\,dx.
\end{equation}
The special solution $\mathcal{D}$, discovered by Barenblatt and Zel'dovi$\check{\rm c}$~\cite{Barenblatt-Zeldovich-1957}, has a self-similar structure,
\begin{equation*}
\label{eq:selfsimilar.form.dipole}
\mathcal{D}(x,t;M)=t^{-\alpha_{\rm d}}D_{M}(\xi), \qquad\xi=x/t^{\beta_{\rm d}},\quad \alpha_{\rm d}=\frac1m,\ \beta_{\rm d}=\frac1{2m},
\end{equation*}
with a profile
$$
\left\{
\begin{array}{l}
D_M(\xi)=\left(\frac{m-1}{2m(m+1)}\right)^{\frac1{m-1}}\xi^{\frac1m}\Big(\xi_M^{\frac{m+1}m}
-\xi^{\frac{m+1}m}\Big)_+^{\frac1{m-1}},
\\[10pt]
\xi_M=\left(\frac{2m(m+1)}{(m-1)\big(\int_0^1 s^{\frac{m+1}{m}}(1-s^{\frac{m+1}{m}})^{\frac1{m-1}}\,ds\big)^{m-1}}\right)^{\frac1{2m}} M^{\frac{m-1}{2m}},
\end{array}
\right.
$$
and, due to the choice of the similarity exponents $\alpha_{\rm d}$ and $\beta_{\rm d}$, its first moment is constant in time. Note that $\mathcal{D}$ has compact support in space for all times.

Though the convergence result~\eqref{eq:conv.KV} is valid in the whole half-line, it only gives the exact decay rate and a nontrivial asymptotic profile in the far field scale, $x=\xi t^{\beta_{\rm d}}$, $0<\xi<\xi_{M_\phi^*}$, since $t^{\alpha_{\rm d}}\mathcal{D}(g(t),t;M_\phi^*)=0$ if $g(t)=o(t^{\beta_{\rm d}})$. The behavior in the very far field was also determined in~\cite{Kamin-Vazquez-1991}, where the authors prove that
$s(t)=\sup\{x:u(x,t)>0\}$ satisfies $s(t)/t^{\beta_{\rm d}}\to\xi_{M_\phi^*}$.

As for the behavior in the near field, the dipole solution gives a hint of the right scaling. Indeed, the decay rate of $\mathcal{D}$ for $x=g(t)$ with $g(t)=o(t^{\beta_{\rm d}})$ is $O(t^{-\alpha_{\rm d}-\frac{\beta_{\rm d}}{m}}(g(t))^{1/m})$. Having this in mind, we proved in~\cite{Cortazar-Quiros-Wolanski-preprint}, by a careful matching with the outer behavior, that
$$
\lim_{t\to\infty} t^{\alpha_{\rm d}+\frac{\beta_{\rm d}} m}\sup_{x\in\mathbb{R}_+}\frac{\big|u(x,t)-\mathcal{D}(x,t;M_\phi^*)\big|}{(1+x)^{\frac1 m}}=0,
$$
which settles down the long time asymptotics in the near field scale; see also~\cite{Cortazar-Elgueta-Quiros-Wolanski-2015} for the linear case. It may seem at first glance that stationary solutions do not play a role here. But they are still there, hidden in the dipole solution, since
$$
D_M(\xi)\approx\left(\frac{m-1}{2m(m+1)}\right)^{\frac1{m-1}}\xi_M^{\frac{m+1}{m(m-1)}}
\phi(\xi)^{\frac1m}\quad\text{for }\xi\approx0.
$$
Thus, in particular,
$$
t^{\alpha_{\rm d}+\frac{\beta_{\rm d}}{m}}u(x,t)\to \left(\frac{m-1}{2m(m+1)}\right)^{\frac1{m-1}}\xi_{M_\phi^*}^{\frac{m+1}{m(m-1)}}x^{1/m}\quad\text{uniformly on compact sets}.
$$
Note that the decay rate, which differs from that of the Cauchy problem, depends on the scale, and is given by the ratio $x^{1/m}/t^{\alpha_{\rm d}+\frac{\beta_{\rm d}}{m}}$.  This makes the matching quite involved, since the overlapping region between the inner and the outer behavior is very narrow.

\medskip

\noindent\textsc{The critical case: Outer behavior. } In the critical two-dimensional case, the behavior at infinity leading to nontrivial stationary solutions is logarithmic. Thus, we have to look for $\phi\in C(\overline\Omega)\cap C^2(\Omega)$ satisfying
\begin{equation}
\label{eq:stationary.problem}
\tag{S}
\Delta\phi=0\quad\text{in }\Omega\subset \mathbb{R}^2,\qquad\phi=0\quad\text{on }\partial\Omega,\qquad |\phi(x)-\log|x||\le C \quad\text{for all } x\in\overline\Omega.
\end{equation}
There is a unique such function if, for instance, $\mathcal{H}$ has the interior tangent ball property; see Section~\ref{sect:stationary}.
As shown in~\cite{Gilding-Goncerzewicz-2007}, the conservation law~\eqref{eq:conservation.law} implies then  on the one hand the global decay rate
\begin{equation}
\label{eq:global.rate}
\|u(\cdot,t)\|_{L^\infty(\mathbb{R}^2)}=O\big((t\log t)^{-\frac1m}\big)\quad\text{as } t\to\infty,
\end{equation}
and on the other hand that the mass  satisfies
\begin{equation}
\label{eq:mass.behavior}
\lim_{t\to\infty}\log t\,M(t)=2m M_\phi^*.
\end{equation}
Using these two facts, Gilding and Goncerzewicz proved in~\cite{Gilding-Goncerzewicz-2007}, in the case in which $\H$ is $C^{2,\alpha}$ and simply connected,   that
\begin{equation}\label{eq:far.field.limit}
\lim_{t\to\infty}(t\log t)^{ 1/m}\sup_{x\in\mathcal{O}_\delta(t)}|u(x,t)-\U\big(x,t;2m M_\phi^*/\log t\big)\big|=0 \quad\text{for any }\delta>0,
\end{equation}
in outer sets
\begin{equation*}
\label{eq:def.outer.sets}
\mathcal{O}_\delta(t)=\big\{x\in\Omega: |x|\ge \delta t^{\frac1{2m}}(\log t)^{-\frac{m-1}{2m}}\big\},\quad t\ge1.
\end{equation*}
In addition, they also proved that
\begin{equation}
\label{eq:limit.support}
t^{-\frac1{2m}}(\log t)^{\frac{m-1}{2m}}\zeta_\pm(t)\to \xi_{2mM_\phi^*} \quad\text{as }t\to\infty,
\end{equation}
where $\xi_M$ has the same meaning as in~\eqref{eq:profile}.
We will extend these results to the case in which $\H$ is not necessarily simply connected in Section~\ref{sect:outer.general.holes}.

It is worth noticing that the appropriate scaled function in this setting,
$$
w(\xi,\tau)=(t\log t)^{\frac1m}u\Big(\xi t^{\frac1{2m}}(\log t)^{-\frac{m-1}{2m}},t\Big), \qquad t=\textrm{e}^{\tau},
$$
does not satisfy the non-linear Fokker-Planck type equation~\eqref{eq:Fokker-Planck}, but a perturbation,
$$
\partial_\tau w(\xi,t) =\Delta w^m(\xi,t)+\frac{\nabla \cdot\big(\xi w(\xi,t)\big)}{2m}\Big(1+\frac1\tau\Big)-\frac{\xi\cdot\nabla w(\xi,\tau)}{2\tau}.
$$
However, in the limit $t\to\infty$ ($\tau\to\infty$) the extra terms become negligible, and we have, also in this case, convergence towards a stationary solution to~\eqref{eq:Fokker-Planck}, which one being given by the conservation law~\eqref{eq:conservation.law}.

Note that, though  the function giving the asymptotic behavior in outer sets  is not a solution of the equation, it still has a selfsimilar structure,
\begin{equation}
\label{eq:limit.is.selfsimilar}
\U(x,t;2mM_\phi^*/\log t)=(t\log t)^{-\frac1m} F_{2mM_\phi^*}(\tilde\xi), \qquad \tilde\xi=\frac{x(\log t)^{\frac{m-1}{2m}}}{t^{\frac1{2m}}}.
\end{equation}
In comparison with the case with no holes, we find logarithmic corrections both in the decay and in the expansion rate.

\medskip

\noindent\textsc{The critical case: Inner behavior. }
The main aim of the present paper is to fill the gap in~\cite{Gilding-Goncerzewicz-2007} by describing the behavior in inner sets of the form
\begin{equation}
\label{eq:def.inner.sets}
\mathcal{I}_\delta(t)=\{x\in\Omega: |x|\le\delta t^{\frac1{2m}}(\log t)^{-\frac{m-1}{2m}}\},\quad t\ge1.
\end{equation}
Due to some details of our technique, we will need to have $\nabla \phi\ne0$. As we will see later, this condition is fulfilled for instance if $\H$ is simply connected and $C^{1,\alpha}$ smooth.
\begin{teo}
\label{thm:main} Let $\H\ni0$ be a bounded open subset of $\mathbb{R}^2$ with $C^{1,\alpha}$ boundary such that the unique solution $\phi$  to~\eqref{eq:stationary.problem} satisfies $\nabla\phi\neq0$ in $\overline\Omega$. If $u$ is a solution to \eqref{problem}, then,  for all $\delta\in(0,\delta_*)$, $\delta_*=\xi_{2mM_\phi^*}\sqrt{(m-1)/(2m)}$,
\begin{equation}\label{eq-asym-con-phi}
\lim_{t\to\infty}(t\log^2 t)^{\frac1m}\sup_{x\in\mathcal{I}_\delta(t)}{\frac{\Big|u(x,t)-\Big(\frac{2m\phi(x)}
{\log t}\Big)^{\frac1m}\U\big(x,t;2m M_\phi^*/\log t\big)\Big|}{\big(\log(|x|+\mbox{\rm e})\big)^{\frac1m}}}=0.
\end{equation}
\end{teo}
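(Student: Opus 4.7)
Following the matched-asymptotics strategy the authors used in the one-dimensional case \cite{Cortazar-Quiros-Wolanski-preprint}, the plan is to anchor the inner description via the outer convergence \eqref{eq:far.field.limit} and to squeeze $u$ between barriers built around the candidate profile
$$\widetilde u(x,t):=\Big(\frac{2m\phi(x)}{\log t}\Big)^{1/m}\U\big(x,t;2mM_\phi^*/\log t\big).$$
The presence of $(\log(|x|+\mathrm e))^{1/m}$ in \eqref{eq-asym-con-phi} reflects the spatial growth of $\widetilde u$ across $\mathcal I_\delta(t)$: since $\phi(x)-\log|x|$ is bounded, $\widetilde u$ grows in $x$ like $(\log|x|)^{1/m}$, so the natural quantity to control is $(u-\widetilde u)/(\log(|x|+\mathrm e))^{1/m}$.

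The first task is to check that $\widetilde u$ is an approximate solution of the PME in $\mathcal I_\delta(t)$. Writing $\widetilde u^m=(2m\phi/\log t)\,\U^m(\cdot,t;M(t))$ with $M(t)=2mM_\phi^*/\log t$ and using $\Delta\phi=0$, one obtains
$$\partial_t\widetilde u-\Delta\widetilde u^m=(\text{term from }M'(t))+(\text{term from }\partial_t(\log t)^{-1/m})-\frac{4m}{\log t}\nabla\phi\cdot\nabla\U^m.$$
On $\mathcal I_\delta(t)$, $\U(\cdot,t;M(t))$ is nearly frozen at its value at $\xi=0$ on the far-field scale $t^{1/(2m)}(\log t)^{-(m-1)/(2m)}$, so each residual is of lower order than $\widetilde u/t$, with slack comfortably compatible with the normalization in \eqref{eq-asym-con-phi}. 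Next, build barriers $u^\pm$ by small multiplicative/additive perturbations of $\widetilde u$ by a parameter $\eta>0$, calibrated so that $\partial_t u^\pm-\Delta(u^\pm)^m\gtrless 0$ dominates the residual. Comparison with $u$ proceeds on the parabolic cylinder $\{(x,t):x\in\mathcal I_\delta(t),\ t\ge t_0\}$: on $\partial\H$ both $u$ and $u^\pm$ vanish; on the moving lateral boundary $|x|=\delta t^{1/(2m)}(\log t)^{-(m-1)/(2m)}$, the outer limit \eqref{eq:far.field.limit} gives $u\lessgtr u^\pm$ up to an $o((t\log t)^{-1/m})$ error that is absorbed by $\eta$; at the initial time $t_0=t_0(\eta)$, the global bound \eqref{eq:global.rate} gives a crude but sufficient comparison after waiting a bit. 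The PME comparison principle then sandwiches $u$ between $u^\pm$, and letting $\eta\to 0$ followed by $\delta\nearrow\delta_*$ yields \eqref{eq-asym-con-phi}.

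The hypothesis $\nabla\phi\neq 0$ on $\overline\Omega$ is used in two related ways. First, it gives $\phi(x)\asymp\mathrm{dist}(x,\partial\H)$ near the hole, so that $\widetilde u^m$ is Lipschitz up to $\partial\H$ and the barriers $u^\pm$ are admissible trial functions there. Second, it ensures $|\nabla\phi|$ is bounded away from zero across the compact part of $\mathcal I_\delta(t)$, which lets us estimate the cross-term $\nabla\phi\cdot\nabla\U^m$ cleanly and incorporate it into the perturbation. The main obstacle is precisely this barrier construction: the inner region spans spatial scales from $O(1)$ to the outer scale, and one family of barriers must simultaneously be a small perturbation of $\widetilde u$ in the $(\log(|x|+\mathrm e))^{1/m}$-normalized sense, dominate the residual uniformly over this range, and match the outer behavior with the right precision on the lateral boundary of $\mathcal I_\delta(t)$. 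This multiscale coupling, rather than any single algebraic step, is the genuine difficulty.
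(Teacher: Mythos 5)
Your overall strategy --- anchor the inner description on the outer limit \eqref{eq:far.field.limit} and squeeze $u$ between barriers modelled on $\widetilde u=(2m\phi/\log t)^{1/m}G$ --- is the paper's strategy at the top level. But the step you dismiss as calibration, ``build barriers $u^\pm$ by small multiplicative/additive perturbations of $\widetilde u$ by a parameter $\eta$,'' is precisely where your argument breaks, and the device that fixes it is the actual content of Sections~\ref{sect:control.above}--\ref{sect:control.below}. Write $V=\eta c(t)Gw$ and split $\partial_tV-\Delta V^m=\mathcal A+\mathcal B$ as in \eqref{eq:def.A.B}. The only favorable contribution produced by the multiplicative factor $\eta>1$ is $\eta cw\,\Delta G^m\bigl(1-(\eta c w)^{m-1}\bigr)$, which has the right sign exactly where $\eta c w\ge 1$, i.e.\ where $\phi(x)\approx\frac{\log t}{2m}$ --- the outer part of the inner region. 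On compact sets (more generally for $|x|\le t^{1/(2m)}/(\log t)^2$) one has $w\sim(\phi/\tfrac{\log t}{2m})^{1/m}\to0$, the factor $1-(\eta cw)^{m-1}$ becomes positive, and $\mathcal A$ is genuinely negative, of order $w\,t^{-1-\frac1m}(\log t)^{-\frac1m}$. With $\nu\equiv1$ in your candidate, $\Delta w^m=0$ because $\Delta\phi=0$, so $\mathcal B$ reduces to the cross term $-2\nabla w^m\cdot\nabla G^m=O\bigl((t\log t)^{-1-\frac1m}\bigr)$, which is strictly smaller than $|\mathcal A|$ there (compare $(\log t)^{-1-\frac1m}$ with $w(\log t)^{-\frac1m}\sim(\log t)^{-\frac2m}$, and $1+\frac1m>\frac2m$ for $m>1$). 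So no choice of $\eta$, $\kappa_0$, $\mu$ makes your $u^+$ a supersolution near the hole; an additive constant fares no better since it does not change $\Delta(u^+)^m$ where it matters.

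The paper's fix is to replace $\phi$ by $\phi^{\nu(t)}$ with $\nu(t)=1\mp\frac1{\log t}$ in the weight $w$ (see \eqref{eq-V} and \eqref{eq:definition.subsolution}). Then $\Delta w^m=\nu(\nu-1)\phi^{\nu-2}|\nabla\phi|^2(\tfrac{\log t}{2m})^{-\nu}$ acquires a definite sign and a quantitatively useful size, and $-G^m\Delta w^m\gtrsim t^{-1-\frac1m}$ in the inner part of the inner region (Lemma~\ref{lem:estimates.B}), which beats $|\mathcal A|$ by a factor $(\log t)^{1/m}$. This is also where $\nabla\phi\neq0$ on $\overline\Omega$ is really used --- to bound $|\nabla\phi|^2\phi^{\nu-2}$ from below, i.e.\ $|\nabla\phi|\asymp1/|x|$ --- not primarily for boundary regularity of the barrier as you suggest. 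Two further points your sketch does not cover: (i) the inner region must be split into an inner and an outer part, with $\mathcal A\ge0$ carrying the outer part and $\mathcal B>0$ carrying the inner part (and the roles of dominant/subordinate terms reversed for the subsolution); (ii) for the lower barrier the initial-time comparison cannot be ``crude'': you need $u(\cdot,T)\ge\ell>0$ on the whole comparison set, which forces the retreat to the approximate domain $\Omega_{\alpha_0}=\{\phi>\alpha_0\}$, the subtraction of $\alpha_0^{\nu(t)}$ in $w$, and the use of the free-boundary asymptotics \eqref{eq:limit.support}. Without these ingredients the comparison argument cannot be closed.
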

Note that the rate of decay  is given by the ratio  $\left(\frac{\log |x|}{t\log^2 t}\right)^{\frac1m}$.  Thus, there is a continuum of possible decay rates, starting with
the decay rate $O((t\log t)^{-\frac1m})$, holding in the far field, all the way up to $O((t\log^2 t)^{-\frac1m})$, that takes
place on compact sets.
Moreover, the scaled function $(t\log^2 t/\log(|x|+\text{\rm e}))^{\frac1m}u(x,t)$ converges in the near field scale, $|x|\le t^{1/2m}g(t)$, $\lim_{t\to\infty}g(t)=0$, to a multiple of $(\phi(x)/\log(|x|+\text{\rm e}))^{\frac1m}$, where $\phi$ is the unique solution to~\eqref{eq:stationary.problem}. In particular,
$$
(t\log^2 t)^{\frac1m}u(x,t)\to\Big(\frac{m M_\phi^*}{\pi}\Big)^{\frac1m} \, (\phi(x))^{\frac1m}\quad  \text{uniformly on compact (in $x$) sets}.
$$

Since $2m\phi(x)/\log t\to 1$ as $t\to\infty$ in outer regions $\mathcal{O}_\delta(t)$, and taking also into account that the support grows as $O(t^{\frac1{2m}}(\log t)^{-\frac{m-1}{2m}})$, we can express the asymptotic behavior in all scales in a unified way.
\begin{coro}
Under the hypotheses of Theorem~\ref{thm:main}
\begin{equation*}\label{eq:global.asymptotics}
\lim_{t\to\infty}(t\log^2 t)^{\frac1m}\sup_{x\in\mathbb{R}^2}{\frac{\Big|u(x,t)-\Big(\frac{2m\phi(x)}
{\log t}\Big)^{\frac1m}\U\big(x,t;2m M_\phi^*/\log t\big)\Big|}{\big(\log(|x|+\mbox{\rm e})\big)^{\frac1m}}}=0.
\end{equation*}
\end{coro}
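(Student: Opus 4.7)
The plan is to combine Theorem~\ref{thm:main} with the outer convergence~\eqref{eq:far.field.limit} and the support localization~\eqref{eq:limit.support}. Extending $u$ and $\phi$ by zero on $\overline{\mathcal{H}}$ so that the quantity inside the supremum vanishes identically on $\mathcal{H}$, fix any $\delta\in(0,\delta_*)$ and any $K>\xi_{2mM_\phi^*}$, and write $r(t)=t^{1/(2m)}(\log t)^{-(m-1)/(2m)}$. Partition $\mathbb{R}^2=\mathcal{I}_\delta(t)\cup \mathcal{A}(t)\cup \mathcal{F}(t)$, with annulus $\mathcal{A}(t)=\{x:\delta r(t)\le|x|\le K r(t)\}$ and very far field $\mathcal{F}(t)=\{|x|>K r(t)\}$. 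On $\mathcal{I}_\delta(t)$ the conclusion is exactly Theorem~\ref{thm:main}. On $\mathcal{F}(t)$ both terms inside the supremum vanish for $t$ large: \eqref{eq:limit.support} yields $\zeta_+(t)<K r(t)$, so $u(\cdot,t)\equiv 0$ there, and by the selfsimilar form~\eqref{eq:limit.is.selfsimilar} the profile $\mathcal{U}(\cdot,t;2mM_\phi^*/\log t)$ is supported in $\{|x|\le \xi_{2mM_\phi^*}r(t)\}\subset \mathbb{R}^2\setminus \mathcal{F}(t)$.

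The heart of the matter is the annulus $\mathcal{A}(t)$, on which I would split
\[
u-\Big(\tfrac{2m\phi}{\log t}\Big)^{1/m}\mathcal{U}=(u-\mathcal{U})+\Big(1-\Big(\tfrac{2m\phi}{\log t}\Big)^{1/m}\Big)\mathcal{U},
\]
where $\mathcal{U}=\mathcal{U}(x,t;2mM_\phi^*/\log t)$. Since $|x|\ge\delta r(t)$ on $\mathcal{A}(t)$, one has $\log(|x|+\mathrm{e})\ge c\log t$ for some $c>0$ and all $t$ large, so $(\log t/\log(|x|+\mathrm{e}))^{1/m}$ is uniformly bounded. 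Using $(t\log^2 t)^{1/m}=(t\log t)^{1/m}(\log t)^{1/m}$ together with~\eqref{eq:far.field.limit} (applied on $\mathcal{O}_\delta(t)\supset\mathcal{A}(t)$), the first summand contributes $o(1)$ to the weighted supremum.

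For the second summand, the bound $|\phi(x)-\log|x||\le C$ from~\eqref{eq:stationary.problem}, combined with $\log|x|=\tfrac{1}{2m}\log t+O(\log\log t)$ on $\mathcal{A}(t)$, gives
\[
\frac{2m\phi(x)}{\log t}=1+O\!\Big(\frac{\log\log t}{\log t}\Big)\quad\text{uniformly on }\mathcal{A}(t),
\]
so $|1-(2m\phi/\log t)^{1/m}|=O(\log\log t/\log t)$. Combined with the sup bound $\|\mathcal{U}\|_\infty=O((t\log t)^{-1/m})$ read off from~\eqref{eq:limit.is.selfsimilar} and the lower bound on $\log(|x|+\mathrm{e})$, the normalized second summand is $O(\log\log t/\log t)\to 0$. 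The only subtlety is this second summand: because the weight in the inner theorem carries an extra $(\log t)^{1/m}$ compared to~\eqref{eq:far.field.limit}, one must check that the factor $(2m\phi/\log t)^{1/m}$—which tends to $1$ only logarithmically—does not destroy the estimate inherited from the outer asymptotics. The $O(1)$ bound on $\phi(x)-\log|x|$ built into~\eqref{eq:stationary.problem} is exactly what makes this matching work cleanly.
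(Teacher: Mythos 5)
Your proof is correct and follows essentially the same route the paper intends: inner sets handled by Theorem~\ref{thm:main}, the annulus by combining~\eqref{eq:far.field.limit} with the fact that $2m\phi(x)/\log t\to1$ there (using $|\phi(x)-\log|x||\le C$ to control the extra $(\log t)^{1/m}$ in the weight), and the very far field by~\eqref{eq:limit.support} together with the compact support of $\U$. The paper only sketches this in one sentence before the corollary; your write-up fills in the details correctly, including the one genuine subtlety about the logarithmic rate of the factor $(2m\phi/\log t)^{1/m}\to1$.
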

This result is still valid in the linear case $m=1$, taking $\U(x,t;M)=\frac{M\textrm{e}^{-\frac{|x|^2}{4t}}}{4\pi t}$. This was proved in~\cite{Herraiz-1999} by means of a representation formula for the solution of the problem in terms of the instantaneous point-source solution (the case of high dimensions, $N\ge3$, was also treated there). In our non-linear setting, such a formula is not available, and we have to use an alternative approach, based in comparison with carefully chosen sub and supersolutions combined with a matching with the outer behavior.  The matching is quite involved, since the rates of decay in inner and outer regions are not the same. We already had this difficulty in~\cite{Cortazar-Elgueta-Quiros-Wolanski-2016}, where we dealt with a two-dimensional nonlocal linear heat equation.

\medskip

\noindent\textsc{The problem in bounded domains. } In any dimension, since we are not asking the hole to be connected, $\Omega$ may have bounded components, where $\phi$ is identically zero, which means that
the above mentioned convergence results do not give the right scalings and profiles there. Indeed, as proved in~\cite{Aronson-Peletier-1981}, no matter the dimension,  nonnegative and nontrivial solutions to the porous medium equation in a bounded set $\Omega_{\rm b}$ with zero Dirichlet data satisfy
$$
\lim_{t\to\infty}t^{\frac1{m-1}}\|u(\cdot,t)-U_{\rm b}(\cdot,t)\|_{L^\infty(\Omega_{\rm b})}=0,
$$
where $U_{\rm b}(x,t)=t^{-\frac1{m-1}}f(x)$ is a solution in separated variables with a profile  $f$ which is the unique nonnegative solution to the non-linear elliptic problem
$$
\Delta f^m+f=0\quad\text{in }\Omega_{\rm b},\qquad f=0\quad\text{on }\partial\Omega_{\rm b};
$$
see also~\cite{Vazquez-2004} and the references therein. Notice that the solution decays faster in this bounded components, and that the asymptotic shape and size are universal, not depending on the initial data.

\medskip

\noindent\textsc{Non-trivial boundary data. }
When the boundary data are stationary and nontrivial, $u(x,t)=g(x)$, $x\in\partial\Omega$, $g\ge0$, $g\not\equiv0$, the asymptotic behavior in exterior domains is quite different; see~\cite{Peletier-1971} for the case of the half-line and~\cite{Quiros-Vazquez-1999} for higher dimensions. Solutions with integrable initial data asymptotically gain mass.

On the half-line the behavior (in all scales) is given by the unique (self-similar) solution to
the problem with trivial initial data, which has the form $U_g(x,t)=f_g(x/t^{1/2})$. Observe that $g$ is a constant in this case.

As for higher dimensions, on compact sets solutions converge uniformly to $\phi^{1/m}$, where now $\phi$ is the solution to
$$
\Delta \phi=0\quad\text{in }\Omega, \qquad \phi=g^m\quad\text{in }\partial\Omega,\qquad \lim_{|x|\to\infty}\phi(x)=0\text{ if } N\ge3,\quad\phi\text{ bounded if }N=2,
$$
and  the outer behavior is given in terms of the solution $\mathcal{U}_{\rm c}$  to
$$
\partial_t \mathcal{U}_{\rm c}=\Delta \mathcal{U}_c^m+c\delta \quad\text{in }\mathcal{D}'(\mathbb{R}^N\times\mathbb{R}_+),\qquad \mathcal{U}_{\rm c}(\cdot,0)=0 \quad \text{a.e.\,in }\mathbb{R}^N,
$$
with logarithmic corrections in the critical case $N=2$. This special solution has a self-similar form, $\mathcal{U}_{\rm c}(x,t)=t^{-\alpha}F_{\rm c}(x/t^\beta)$, with similarity exponents $\alpha=(N-2)/(N(m-1)+2)$, $\beta=m/(N(m-1)+2)$. The right constant $c$ is inherited from the inner behavior through a matching procedure.

\medskip

\noindent\textsc{Organization of the paper. } We devote Section~\ref{sect:stationary} to the analysis of the stationary problem~\eqref{eq:stationary.problem}. Next, in Section~\ref{sect:outer.general.holes}, we extend the results on the outer behavior from~\cite{Gilding-Goncerzewicz-2007} to more general holes than those considered there. Convenient super and subsolutions with the adequate large time behavior are constructed respectively in Sections~\ref{sect:control.above} and~\ref{sect:control.below}.

\medskip

\noindent\emph{Notations. }  In what follows, $B_r=\{x\in\mathbb{R}^2: |x|<r\}$,  $F_*=F_{2mM_\phi^*}$, $\xi_*=\xi_{2mM_\phi^*}$, and $G(x,t)=\U(x,t;2mM_\phi^*/\log t)$.

\section{The stationary problem}
\label{sect:stationary}
\setcounter{equation}{0}

This section is devoted to  studying the stationary problem~\eqref{eq:stationary.problem}. Existence and uniqueness were already proved in~\cite{Gilding-Goncerzewicz-2007} when the boundary of the hole is $C^{2,\alpha}$ smooth, using some classical results that can be found for instance in~\cite{Benilan-1990}. This smoothness assumption can be relaxed. As we prove next, it is enough to have the \emph{interior tangent ball property}:
\begin{equation}
\label{eq:interior.tangent.ball.property}
\forall \bar x\in\partial\H\text{ there are } x_0\in \H \text{ and } r_0>0 \text{ such that } \bar x\in\partial B_{r_0}(x_0)\text{ and }B_{r_0}(x_0)\subset \H.
\end{equation}
\begin{prop}
\label{prop:stationary}
Let $\H\ni0$ be a bounded, open subset of $\mathbb{R}^2$ with the interior tangent ball property~\eqref{eq:interior.tangent.ball.property}. There exists a unique solution to~\eqref{eq:stationary.problem}.
\end{prop}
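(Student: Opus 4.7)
The plan is to construct $\phi$ by an exhaustion argument and to prove uniqueness by comparison with a radial logarithmic barrier. First I would set $\psi(x)=\phi(x)-\log|x|$; since $0\in\mathcal{H}$, the function $\log|\cdot|$ is smooth and harmonic in a neighbourhood of $\overline\Omega$, so the problem becomes equivalent to finding a bounded $\psi\in C(\overline\Omega)\cap C^2(\Omega)$, harmonic in $\Omega$, with $\psi=-\log|\cdot|$ on $\partial\mathcal{H}$ (a continuous and uniformly bounded datum, since $\partial\mathcal{H}$ is compact and stays away from $0$). For each $R$ so large that $\overline{\mathcal{H}}\subset B_R$ I would solve, by Perron's method, the Dirichlet problem
\begin{equation*}
\Delta\psi_R=0 \text{ in }\Omega\cap B_R,\qquad \psi_R(x)=-\log|x|\text{ on }\partial\mathcal{H},\qquad \psi_R=0\text{ on }\partial B_R.
\end{equation*}
Regularity on $\partial B_R$ is trivial, while at every $\bar x\in\partial\mathcal{H}$ the interior tangent ball property supplies the explicit barrier $w(x)=\log(|x-x_0|/r_0)$, which is strictly positive and harmonic in $\Omega$ and vanishes at $\bar x$. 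The maximum principle then yields the uniform bound $\|\psi_R\|_{L^\infty(\Omega\cap B_R)}\le M:=\max_{\partial\mathcal{H}}|\log|\cdot||$.

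Next I would let $R\to\infty$. By the uniform bound and standard interior estimates for harmonic functions, along a subsequence $\psi_R\to\psi$ in $C^2_{\mathrm{loc}}(\Omega)$, with $\psi$ harmonic and bounded by $M$. Continuity of $\psi$ up to $\partial\mathcal{H}$ is inherited from the barrier step: the same $w$ as above, combined with the (boundary-data) continuity of $-\log|\cdot|$ on $\partial\mathcal{H}$, produces a modulus of continuity for $\psi_R$ at each boundary point that is independent of $R$, so this modulus is inherited by $\psi$. Setting $\phi:=\psi+\log|\cdot|$ then delivers a function in $C(\overline\Omega)\cap C^2(\Omega)$ that is harmonic in $\Omega$, vanishes on $\partial\mathcal{H}$, and satisfies $|\phi(x)-\log|x||=|\psi(x)|\le M$ on $\overline\Omega$, proving existence.

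For uniqueness, if $\phi_1,\phi_2$ both solve \eqref{eq:stationary.problem}, then $w:=\phi_1-\phi_2$ is harmonic in $\Omega$, vanishes on $\partial\mathcal{H}$, and is globally bounded because each $\phi_i-\log|\cdot|$ is; set $K:=\|w\|_\infty$. Choosing $\rho>0$ with $B_\rho\subset\mathcal{H}$, the radial harmonic function $g_R(x):=\log(|x|/\rho)/\log(R/\rho)$ satisfies $g_R\ge 0=|w|/K$ on $\partial\mathcal{H}$ and $g_R=1\ge|w|/K$ on $\partial B_R$, so the maximum principle on $\Omega\cap B_R$ gives $|w(x)|\le K\,g_R(x)$, which tends to zero as $R\to\infty$ with $x$ fixed; hence $w\equiv0$. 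I expect the main technical point to be the uniform-in-$R$ continuity at $\partial\mathcal{H}$ in the existence part: one must verify that the logarithmic barrier from the interior tangent ball property produces a modulus of continuity at each $\bar x\in\partial\mathcal{H}$ that does not degenerate as $R\to\infty$, so that the limit genuinely attains its boundary values continuously. Everything else reduces to a standard use of Perron's method and the maximum principle.
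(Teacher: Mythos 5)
Your proof is correct, and while it rests on the same basic ingredients as the paper's --- exhaustion by balls $B_R$, a logarithmic barrier supplied by the interior tangent ball, and uniqueness by comparison with a harmonic function involving $\log|x|$ --- the route through the existence part is genuinely different. The paper works directly with $\phi_n$, harmonic in $B_n\setminus\overline\H$ with data $0$ on $\partial\H$ and $\log\frac{|x|}{R}$ on $\partial B_n$; comparison gives the sandwich $\log\frac{|x|}{R}\le\phi_n\le\phi_{n+1}\le\log\frac{|x|}{r}$, so the limit exists \emph{monotonically} (Harnack's convergence theorem, no subsequences, and the bound $|\phi-\log|x||\le C$ is immediate), and continuity at $\partial\H$ is a one-line comparison with $2\log\frac{|x-x_0|}{r_0}$ precisely because the data being controlled vanishes identically on $\partial\H$. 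You instead subtract $\log|x|$ first, which buys you a uniformly bounded problem and lets you invoke standard compactness, but at the price of \emph{non-constant} boundary data $-\log|\cdot|$ on $\partial\H$, so you must establish a uniform-in-$R$ boundary modulus of continuity --- which you rightly flag as the crux. One point there needs care: $\log(|x-x_0|/r_0)$ vanishes on all of $\partial B_{r_0}(x_0)$, and nothing in~\eqref{eq:interior.tangent.ball.property} prevents the tangent ball from meeting $\partial\H$ at points other than $\bar x$; with non-constant data the comparison $|\psi_R-g(\bar x)|\le\ep+k_\ep w$ can then fail on the boundary at such a point. The fix is standard --- replace $B_{r_0}(x_0)$ by the ball of radius $r_0/2$ internally tangent to it at $\bar x$, whose closure meets $\overline\Omega$ only at $\bar x$ --- but it should be said explicitly; the paper sidesteps the issue entirely because its data on $\partial\H$ is zero. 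Your uniqueness argument (compare the bounded difference with $K\log(|x|/\rho)/\log(R/\rho)$ and let $R\to\infty$) is the same Phragm\'en--Lindel\"of idea as the paper's, and is if anything stated more cleanly.
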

\begin{proof} Take $r,R>0$ such that
$B_r\subset \H\subset B_{R}$. For each $n\in\N$, $n>R$, let $\phi_n$ be the harmonic function in $B_n\setminus\overline\H$ such that $\phi_n\equiv 0$ on $\partial\H$ and $\phi_n(x)=\log\frac{|x|}R$ on $\partial B_n$. A simple comparison argument shows that $\log\frac{|x|}R\le\phi_n(x)\le \phi_{n+1}(x)\le\log\frac{|x|}r$ in $B_n\setminus\H$. Therefore, there exists a function  $\phi$ such that $\phi_n(x)\nearrow\phi(x)$ as $n\to\infty$ for every $x\in \overline\Omega$. Moreover, $\log\frac{|x|}R\le \phi(x)\le \log \frac{|x|}r$, which implies that $\phi(x)-\log |x|$ is bounded. On the other hand, by Harnack's Convergence Theorem, $\phi$ is harmonic in $\Omega$.

Since $\phi_n=0$ on $\partial\H$ for every $n$, the same property holds for $\phi$. Let us now prove that $\phi$ is continuous on $\partial\H$. Given $\bar x\in\partial\H$, let $x_0\in \H$ and  $r_0>0$ be as in~\eqref{eq:interior.tangent.ball.property}. We can assume without loss of generality that $r_0<r$.  Then, there exists a value $\kappa>0$ such that for $n\ge \kappa$ there holds that $\phi_n(x)=\log \frac{|x|}R\le 2\log\frac{|x-x_0|}{r_0}$  if $x\in\partial B_n$. Hence,  the comparison principle yields that $\phi_n(x)\le 2\log\frac{|x-x_0|}{r_0}$ for all $x\in B_n\setminus\H$. We conclude that $\phi(x)\le \log\frac{|x-x_0|}{r_0}$ in $\overline\Omega$, so that $\phi(x)\to0$ as $x\to \bar x$.

In order to prove uniqueness we let $\phi_1$ and $\phi_2$ be two solutions and $\psi=\phi_1-\phi_2$. Let $\K\subset\subset \overline\Omega$ and $R>0$ large so that $\K\subset B_R$. Let $\ep>0$. If $R$ is large enough, $|\phi_i(x)-\log |x||<\ep\log \frac{|x|}r$ on $\partial B_R$, where $r>0$ is as before. Therefore, by the comparison principle, $\psi(x)\le 2\ep\log\frac{|x|}r$ in $B_R\setminus\H\supset \K$. Letting $\ep\to0$ we get that $\psi\le 0$ in $\K$.

By reversing the roles of $\phi_1$ and $\phi_2$ we obtain that $\phi_1=\phi_2$.
\end{proof}

%

In order to prove our main result, Theorem~\ref{thm:main}, we need $|\nabla \phi|$ to be comparable to $1/|x|$. We start by checking that this is true \lq\lq at infinity''.
\begin{prop} Assume the hypotheses of Proposition~\ref{prop:stationary}. There exists  $R>0$ such that the unique solution $\phi$ to~\eqref{eq:stationary.problem} satisfies
\begin{equation}\label{eq-bound-nabla-phi}
\frac12 \le x\cdot\nabla\phi(x)\le |x||\nabla\phi(x)|\le 2\quad\mbox{for }|x|\ge R.
\end{equation}
\end{prop}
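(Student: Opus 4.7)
The plan is to reduce the gradient estimate to the fact that $\phi(x)-\log|x|$ is a bounded harmonic function outside a large ball, and then exploit the explicit structure of such functions in two dimensions via the Kelvin transform.

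\textbf{Step 1: Reduction.} Set $v(x):=\phi(x)-\log|x|$ and pick $R_0$ large enough that $\overline\H\subset B_{R_0}$. Since $\phi$ solves~\eqref{eq:stationary.problem} and $\log|x|$ is harmonic away from the origin, $v$ is harmonic in $\mathbb{R}^2\setminus\overline{B_{R_0}}$. By the hypothesis $|\phi(x)-\log|x||\le C$, the function $v$ is also bounded there.

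\textbf{Step 2: Kelvin transform and expansion at infinity.} Define $\tilde v(y):=v\bigl(y/|y|^2\bigr)$ for $0<|y|<1/R_0$. Since in two dimensions the Kelvin transform preserves harmonicity without any conformal factor, $\tilde v$ is harmonic and bounded on the punctured disk $\{0<|y|<1/R_0\}$. By the removable singularity theorem for bounded harmonic functions, $\tilde v$ extends harmonically to the whole disk $\{|y|<1/R_0\}$, and hence admits a Taylor expansion
$$
\tilde v(y)=c_0+c_1\cdot y+O(|y|^2)\qquad\text{as }y\to0,
$$
with corresponding bound $|\nabla\tilde v(y)|\le C'$ near $y=0$. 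Unraveling the transform gives $v(x)=c_0+c_1\cdot x/|x|^2+O(|x|^{-2})$ as $|x|\to\infty$, and the chain rule together with $|D(y/|y|^2)|=O(|y|^{-2})$ (equivalently $O(|x|^2)$ at infinity) yields
$$
|\nabla v(x)|=O(|x|^{-2})\qquad\text{as }|x|\to\infty.
$$

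\textbf{Step 3: Conclusion.} Since $\nabla\log|x|=x/|x|^2$, we obtain
$$
\nabla\phi(x)=\frac{x}{|x|^2}+\nabla v(x)=\frac{x}{|x|^2}+O(|x|^{-2}),
$$
so that $x\cdot\nabla\phi(x)=1+O(|x|^{-1})\to1$ and $|x|\,|\nabla\phi(x)|\le 1+|x|\,|\nabla v(x)|=1+O(|x|^{-1})\to1$ as $|x|\to\infty$. Choosing $R$ large enough that the error terms have absolute value at most $1/2$ gives $\tfrac12\le x\cdot\nabla\phi(x)$ and $|x|\,|\nabla\phi(x)|\le 2$ for $|x|\ge R$; the middle inequality in~\eqref{eq-bound-nabla-phi} is Cauchy--Schwarz.

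The only delicate point is Step 2, namely justifying that a bounded harmonic function on an exterior planar domain has gradient decaying like $|x|^{-2}$; the Kelvin transform reduces this to a standard Taylor expansion at an interior point, so no essential difficulty arises. An alternative route would be to argue directly with interior gradient estimates applied to $v$ on balls $B_{|x|/2}(x)$, which give $|\nabla v|=O(|x|^{-1})$; this is too weak to deduce $x\cdot\nabla\phi\to1$, which is why the Kelvin/removable-singularity argument is preferable here.
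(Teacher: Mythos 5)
Your proof is correct, and it takes a genuinely different route from the paper's. The paper argues by blow-down and compactness: it considers the rescalings $\psi_k(x)=\phi(kx)-\log(k|x|/r)$, which form a uniformly bounded family of harmonic functions; any subsequential limit is bounded and harmonic on $\mathbb{R}^2\setminus\{0\}$, extends across the origin, and is constant by Liouville, whence $k\nabla\phi(kx)\to x/|x|^2$ uniformly on an annulus and therefore $|y|\nabla\phi(y)\to y/|y|$ as $|y|\to\infty$. You instead work with the single function $v=\phi-\log|x|$, push it to a punctured disk by the planar Kelvin transform (which preserves harmonicity with no conformal factor), remove the singularity, and read off $|\nabla v(x)|=O(|x|^{-2})$ from the Taylor expansion at the interior point $y=0$ together with the fact that the inversion has derivative of norm $|x|^{-2}$. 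The two arguments rest on the same underlying facts (boundedness of $\phi-\log|x|$ and removability of isolated singularities of bounded planar harmonic functions), but yours is more explicit and quantitative: it yields the rate $x\cdot\nabla\phi(x)=1+O(|x|^{-1})$ and $|x|\,|\nabla\phi(x)|=1+O(|x|^{-1})$, whereas the paper's soft compactness argument only gives convergence without a rate. Your closing remark is also apt: plain interior gradient estimates on $B_{|x|/2}(x)$ would only give $|\nabla v|=O(|x|^{-1})$, which is insufficient for the lower bound $x\cdot\nabla\phi\ge\tfrac12$, so the extra structure from the Kelvin transform (or, in the paper, from Liouville) is genuinely needed.
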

\begin{proof}Let $r>0$ such that $B_{2r}\subset\H$.
For $k\in\N$, let $\psi_k(x)=\phi(kx)-\log\big(\frac{k|x|}r\big)$. Then, $\psi_k$ is harmonic in $\R^2\setminus\overline\H_k$ where $\H_k=\{x\in\R^2:kx\in\H\}$. Moreover, there exists $C>0$ such that $|\psi_k(x)|\le C$ in $\R^2\setminus\overline\H_k$. Hence, for every sequence $\{k_n\}$ with $k_n\to\infty$ there is a subsequence $\{k_{n_j}\}$ and a harmonic and bounded function $\psi$ such that $\psi_{k_{n_j}}\to\psi$ as $j\to\infty$ uniformly on compact subsets of $\R^2\setminus\{0\}$. Since $\psi$ is harmonic and bounded in $\R^2\setminus\{0\}$, it can be extended to a harmonic and bounded function in $\R^2$, which, by Liouville's Theorem, is constant, $\psi\equiv C_0$.

Let $\zeta_j=\psi_{k_{n_j}}-C_0$. Since $\zeta_j\to0$ uniformly on compact subsets of $\R^2\setminus\{0\}$, then  $|\nabla\zeta_j|\to0$ uniformly in $B_2\setminus B_{\frac12}$. Since the limit is independent of the subsequence, convergence is not restricted to subsequences, and hence
\[
k\nabla\phi(kx)\to \frac x{|x|^2}\quad\mbox{uniformly for } \frac12\le |x|\le 2\mbox{ as }k\to\infty.
\]
Therefore, taking $y=kx$ with $|x|=1$, which means $k=|y|$, we conclude that
\begin{equation}\label{eq-bound-ynablaphi}
|y|\nabla\phi(y)\to \frac y{|y|}\quad\mbox{as }|y|\to\infty.
\end{equation}
Estimate~\eqref{eq-bound-nabla-phi} now follows easily by observing that
$$
x\cdot\nabla\phi(x)=1+\frac x{|x|}\cdot\left(|x|\nabla \phi(x) -\frac x{|x|}\right),
$$
since we know from~\eqref{eq-bound-ynablaphi} that $\left||x|\nabla \phi(x) -\frac x{|x|}\right|\le1/2$ if $|x|\ge R$ for some large $R$.
\end{proof}
If $\partial\H\in C^{1,\alpha}$, then $\phi\in C^1(\overline\Omega)$, and  we deduce that $|x||\nabla \phi(x)|\le C$ in $\overline\Omega$. In order to proceed further in this smooth case, we still have to prove that $|x||\nabla \phi(x)|\ge c>0$ in bounded sets. Since $|\nabla\phi|\ge c_0>0$ in a neighborhood of $\partial\H$, we only need to check that $\nabla\phi\neq0$ in $\Omega$. This is the case if, for instance, in addition to being smooth, $\H$ is simply connected, as we prove next.
\begin{prop}
\label{prop:nontrivial.gradient}
Let $\H\ni0$ be a bounded, simply connected, open subset of $\mathbb{R}^2$ with $C^{1,\alpha}$ boundary. Let $\phi$ be the unique solution to~\eqref{eq:stationary.problem}. Then, $\nabla\phi\neq0$ in $\overline\Omega$.
\end{prop}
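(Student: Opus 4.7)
The plan is to realise $\phi$ as $\log|\Phi|$ for a Riemann map $\Phi$ from the exterior domain onto the exterior of the unit disk, and then to use the non-vanishing of the derivative of any conformal equivalence. Since $\H$ is bounded, simply connected and $C^{1,\alpha}$-smooth, $\partial\H$ is a single Jordan curve and the complement of $\overline\H$ in the Riemann sphere, $\tilde\Omega:=(\R^2\cup\{\infty\})\setminus\overline\H$, is a simply connected domain containing $\infty$. The Riemann mapping theorem provides a conformal isomorphism $\Phi\colon\tilde\Omega\to\{|w|>1\}\cup\{\infty\}$ with $\Phi(\infty)=\infty$ and, after a standard normalisation, $\Phi(z)=az+O(1)$ as $|z|\to\infty$ for some $a\neq 0$. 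Moreover, Kellogg's theorem for $C^{1,\alpha}$ boundaries extends $\Phi$ to a $C^{1,\alpha}$-diffeomorphism of $\overline\Omega$ onto $\{|w|\ge 1\}$, with $\Phi'$ not vanishing up to the boundary.

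Consider $v(z):=\log|\Phi(z)|$. It is harmonic in $\Omega$, continuous on $\overline\Omega$, vanishes on $\partial\H$ (because $|\Phi|=1$ there), and satisfies $v(z)=\log|z|+O(1)$ at infinity. By the uniqueness statement in Proposition~\ref{prop:stationary}, $\phi\equiv v$ on $\overline\Omega$. Because $\Phi$ has no zero in $\overline\Omega$, a local holomorphic branch of $\log\Phi$ is available near every point, with real part equal to $\phi$, and the Cauchy--Riemann equations yield
\[
|\nabla\phi(z)|^2=\left|\frac{\Phi'(z)}{\Phi(z)}\right|^2\quad\text{for every } z\in\overline\Omega.
\]
Since $\Phi$ is a conformal equivalence, $\Phi'(z)\neq 0$ in $\Omega$, and Kellogg's theorem extends this to $\overline\Omega$; together with $\Phi\neq 0$ on $\overline\Omega$, we conclude $\nabla\phi\neq 0$ on $\overline\Omega$, which proves the proposition.

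The only non-elementary ingredient is the boundary behaviour of the Riemann map: Carathéodory's continuous extension is not enough, and one needs both the $C^1$-extension of $\Phi$ and the non-vanishing of $\Phi'$ up to $\partial\H$. These are precisely what Kellogg's theorem supplies, and this is the point at which the $C^{1,\alpha}$ hypothesis on $\partial\H$ is essential. (As a variant, one can prove $\nabla\phi\neq 0$ in the open set $\Omega$ by the conformal computation above, and treat $\partial\H$ separately by Hopf's lemma, which is available for $C^{1,\alpha}$ boundaries since they satisfy an interior sphere condition.)
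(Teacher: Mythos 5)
Your proof is correct and is essentially the paper's argument: both represent $\phi$ as the logarithm of the modulus of a conformal map and conclude from the non-vanishing of the derivative of a conformal equivalence, using boundary regularity of the Riemann map to cover $\partial\H$. The only difference is cosmetic: the paper first performs a Kelvin inversion so as to apply the Riemann mapping theorem to a bounded simply connected domain, then unwinds the formula to get $\phi(x)=-\log\big|f^{-1}\big(\tfrac{r^2x}{|x|^2}\big)\big|$, whereas you work directly with the exterior Riemann map $\Phi$ normalized at infinity; you also make explicit the appeal to Kellogg's theorem, which the paper leaves implicit when asserting $\nabla\phi\neq0$ on the closed set $\overline\Omega$. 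One small caveat: in your parenthetical variant, the claim that a $C^{1,\alpha}$ boundary with $\alpha<1$ satisfies an interior sphere condition is false (consider the region above $y=|x|^{1+\alpha}$ at the origin); the classical Hopf lemma would need the $C^{1,\text{Dini}}$ version instead, but this does not affect your main argument, which rests on Kellogg's theorem rather than on Hopf.
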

\begin{proof} We will get a semi-explicit formula for the solution $\phi$, from where the result follows immediately.

Let $r>0$ be such that $B_r\subset\H$. First, we perform an inversion with respect to $B_r$. The \lq inverted' set
\[
\Omega'=\Big\{x=\frac{r^2 y}{|y|^2}:y\in \Omega\Big\}\cup\{0\}
\]
is a simply connected open set contained in $B_r$. Moreover, since $\partial\H$ is a Jordan arc and $0\not\in\partial\H$, then
\[\partial\Omega'=\Big\{x=\frac{r^2 y}{|y|^2}:y\in \partial\H\Big\}
\]
is also a Jordan arc. The Kelvin transform $\psi(x)=\phi\big(\frac{r^2 x}{|x|^2}\big)$  is well defined and harmonic in $\Omega'\setminus\{0\}$,  $\psi=0$ on $\partial\Omega'$, and $\psi(x)-\log\frac1{|x|}$  bounded.

%
%
%
%
%
%

%

Let now $f:B_1\to\Omega'$ be a conformal mapping. This is, $f$ is holomorfic in $B_1$, continuous in $\overline B_1$, $f(\partial B_1)=\partial\Omega'$, $f$ is one to one between $B_1$ and $\Omega'$, $f'(x)\neq0$ for $x\in B_1$, $f(0)=0$ and ${\mathfrak a}:=f'(0)\in\mathbb{R}$, $\mathfrak{a}>0$. The existence (and uniqueness) of such a mapping is guaranteed by Riemann's Mapping Theorem. Let
$$
\varphi(x)=\psi(f(x))=\phi\big({\textstyle\frac{r^2 f(x)}{|f(x)|^2}}\big)\quad\mbox{for }x\in B_1\setminus\{0\}.
$$
Since $f(0)=0$ and ${\mathfrak a}=f'(0)>0$, there exists $\ep>0$ such that $2{\mathfrak a}|x|\ge|f(x)|\ge \frac {\mathfrak a}2|x|$ if $|x|\le\ep$. Thus, since $f$ is one to one, there exists $\delta>0$ such that $\delta<|f(x)|<r$ if $\ep<|x|<1$. Hence,
\[\Big|\log{\textstyle\frac1{|f(x)|}}-\log{\textstyle\frac 1{{\mathfrak a}|x|}}\Big|
=\left|\log {\textstyle\frac{|f(x)|}{{\mathfrak a}|x|}}\right|\le C\quad\mbox{for } 0< |x|<1.
\]
On the other hand,
for $x\in B_1\setminus\{0\}$,
\[
\Big|\varphi(x)-\log{\textstyle\frac1{|f(x)|}}\Big|=\big|\phi({\textstyle
\frac{r^2 f(x)}{|f(x)|^2}}\big)-\log{\textstyle\frac1{|f(x)|}}\Big|=\big|\phi(y)-\log|y|+2\log r\big|\quad
\mbox{with }y={\textstyle\frac{r^2f(x)}{|f(x)|^2}}\in \overline\Omega,
\]
and hence  $\varphi(x)-\log\frac 1{|x|}$ is bounded in $B_1\setminus\{0\}$. Therefore, since $\varphi$ is harmonic in $B_1\setminus\{0\}$ and $\varphi=0$ on $\partial B_1$, we deduce that $\varphi(x)-\log\frac1{|x|}=0$ in $B_1\setminus\{0\}$.
Thus, $\phi(x)=-\log \big|f^{-1}\big(\frac{r^2 x}{|x|^2}\big)\big|$.
In particular, $\nabla\phi\neq 0$ in $\overline\Omega$.
\end{proof}

\begin{coro}
Under the hypotheses of Proposition~\ref{prop:nontrivial.gradient}, there exist constants $C,c>0$ such that
$$
c\le |x| |\nabla \phi(x)|\le C.
$$
\end{coro}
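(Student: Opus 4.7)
The plan is to patch together three pieces of information already established in the excerpt: the asymptotic estimate \eqref{eq-bound-nabla-phi} at infinity, the nonvanishing of $\nabla\phi$ on $\overline\Omega$ provided by Proposition~\ref{prop:nontrivial.gradient}, and the $C^1$ regularity of $\phi$ up to the boundary that follows from $\partial\H\in C^{1,\alpha}$ together with standard boundary regularity for harmonic functions.

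Fix $R>0$ as in \eqref{eq-bound-nabla-phi}, and recall that, since $0\in\H$ and $\H$ is open, there exists $r_0>0$ with $B_{r_0}\subset\H$; in particular $|x|\ge r_0$ for every $x\in\overline\Omega$. For the upper bound I would split $\overline\Omega=\big(\overline\Omega\cap\overline{B_R}\big)\cup\big(\overline\Omega\setminus B_R\big)$. On the outer piece \eqref{eq-bound-nabla-phi} already gives $|x||\nabla\phi(x)|\le 2$. On the inner piece $\overline\Omega\cap\overline{B_R}$, which is compact, $\phi\in C^1(\overline\Omega)$ makes $|\nabla\phi|$ continuous, hence bounded by some $M>0$; multiplying by $|x|\le R$ yields $|x||\nabla\phi(x)|\le RM$. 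Taking $C=\max(2,RM)$ closes the upper estimate.

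For the lower bound, again on $\overline\Omega\setminus B_R$ the estimate \eqref{eq-bound-nabla-phi} gives directly $|x||\nabla\phi(x)|\ge x\cdot\nabla\phi(x)\ge \tfrac12$. On the compact set $K=\overline\Omega\cap\overline{B_R}$, Proposition~\ref{prop:nontrivial.gradient} tells us that $\nabla\phi\neq 0$ on $\overline\Omega$, and the continuous function $|\nabla\phi|$ attains a positive minimum $c_0>0$ on $K$. Combined with $|x|\ge r_0$ on $K$ this gives $|x||\nabla\phi(x)|\ge r_0 c_0$. Setting $c=\min(\tfrac12,r_0 c_0)$ finishes the proof.

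There is no real obstacle: the only subtlety is noting that $\overline\Omega$ stays away from the origin (so that we can pass from a uniform lower bound on $|\nabla\phi|$ to one on $|x||\nabla\phi(x)|$), and that the $C^{1,\alpha}$ hypothesis on $\partial\H$ is what lets us treat $|\nabla\phi|$ as a continuous function on the closed set $\overline\Omega$ and thereby invoke compactness of $\overline\Omega\cap\overline{B_R}$.
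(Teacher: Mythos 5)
Your proof is correct and follows essentially the same route the paper sketches in the paragraph preceding Proposition~\ref{prop:nontrivial.gradient}: the bound~\eqref{eq-bound-nabla-phi} handles $|x|\ge R$, while $C^1$ regularity up to the boundary, the nonvanishing of $\nabla\phi$ from Proposition~\ref{prop:nontrivial.gradient}, and compactness of $\overline\Omega\cap\overline{B_R}$ (together with $|x|\ge r_0$ there) handle the bounded part. Nothing is missing.
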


\section{Outer behavior for general holes}
\label{sect:outer.general.holes}
\setcounter{equation}{0}

The aim of this section is to extend the results of~\cite{Gilding-Goncerzewicz-2007} on the exterior behavior and the global decay to the general case in which $\H$ is not necessarily simply connected and smooth.
The proof is based on the result of \cite{Gilding-Goncerzewicz-2007} for simply connected smooth holes.

\begin{teo}\label{teo-asym-ext-phi} Let $\H\ni0$ be a bounded, open subset of $\mathbb{R}^2$ and $u$ a solution to \eqref{problem}. Then, equations~\eqref{eq:global.rate}--\eqref{eq:limit.support} hold.
\end{teo}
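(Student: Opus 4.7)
The strategy is to sandwich $u$ between the solutions of~\eqref{problem} in the exterior of suitable smooth simply connected approximating holes, for which the results of~\cite{Gilding-Goncerzewicz-2007} apply directly, and then pass to the limit in the approximation.

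The first step is to construct two families $\{\H_n^\pm\}$ of simply connected, bounded, open subsets of $\R^2$ with $C^{2,\alpha}$ boundary, satisfying $\H_n^-\subset\H\subset\H_n^+$, chosen so that the corresponding solutions $\phi_n^\pm$ of~\eqref{eq:stationary.problem} in $\Omega_n^\pm=\R^2\setminus\overline{\H_n^\pm}$ converge to $\phi$ locally uniformly in $\Omega$; this, together with the estimates of Section~\ref{sect:stationary}, will give $M^*_{\phi_n^\pm}\to M_\phi^*$. The outer family is obtained by taking a $1/n$-tubular neighborhood of $\overline\H$, filling in any bounded components of its complement to enforce simple connectedness, and smoothing. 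The inner family is more delicate when $\H$ is not simply connected: thin corridors must be carved inside $\H$ joining its connected components and any interior holes, with widths chosen small enough that $\phi_n^-\to\phi$ and the resulting $\H_n^-$ is $C^{2,\alpha}$.

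Next, I compare. Let $u_n^\pm$ be the solution of~\eqref{problem} in $\Omega_n^\pm$ with initial data $u_0\mathbf{1}_{\Omega_n^\pm}$. Since $\Omega_n^+\subset\Omega$ and $u\ge 0=u_n^+$ on $\partial\H_n^+$, the parabolic comparison principle gives $u_n^+\le u$ on $\Omega_n^+$. For the reverse bound, Hopf's lemma applied to $u$ on $\Omega$ yields a non-positive outer normal derivative of $u^m$ along $\partial\H$, so the extension of $u$ by zero across $\partial\H$ into $\H\setminus\overline{\H_n^-}$ is a weak subsolution of the PME on $\Omega_n^-$; comparison then gives $u\le u_n^-$ on $\Omega$. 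Since each $u_n^\pm$ lives in the exterior of a simply connected $C^{2,\alpha}$ hole, the theorem of~\cite{Gilding-Goncerzewicz-2007} provides~\eqref{eq:global.rate}--\eqref{eq:limit.support} for $u_n^\pm$, with $M_\phi^*$ replaced by $M^*_{\phi_n^\pm}$.

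Finally, I pass to the limit $n\to\infty$. Using $M^*_{\phi_n^\pm}\to M_\phi^*$, the sandwich $u_n^+\le u\le u_n^-$ immediately gives the global decay~\eqref{eq:global.rate} and, after integration in $\Omega$, the mass behavior~\eqref{eq:mass.behavior}. The outer convergence~\eqref{eq:far.field.limit} follows from the continuity of the Barenblatt profile in its mass parameter combined with the uniform sandwich on compact subsets of $\{|\xi|\ge\delta\}$ in the rescaled variable, while~\eqref{eq:limit.support} follows by comparing the supports of $u_n^+$, $u$ and $u_n^-$. The main obstacle is the construction of the inner approximation $\H_n^-$ when $\H$ is not simply connected: because two-dimensional harmonic measure depends logarithmically on the geometry of the hole, the connecting corridors must shrink very fast, and proving quantitatively that $\phi_n^-\to\phi$ (and hence $M^*_{\phi_n^-}\to M_\phi^*$) is the technically most involved step.
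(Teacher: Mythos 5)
Your sandwich-by-comparison strategy is the right general idea, but the way you build the approximating holes has a genuine gap, and it is precisely the case the theorem is meant to add to~\cite{Gilding-Goncerzewicz-2007}. The inner family $\H_n^-\subset\H$ must be connected (being simply connected), so if $\H$ has two or more connected components there is no connected subset of $\H$ meeting all of them: the ``thin corridors carved inside $\H$ joining its connected components'' do not exist, since any path between distinct components necessarily crosses $\Omega$. Consequently $\H_n^-$ can only exhaust one component, $\phi_n^-$ stays bounded away from $\phi$ near the missed components, and $M^*_{\phi_n^-}\not\to M_\phi^*$; the upper half of your sandwich then converges to a Barenblatt profile with the wrong mass and \eqref{eq:far.field.limit} does not follow. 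The outer family has the same problem (a thin tubular neighborhood of a disconnected $\overline\H$ is disconnected, hence not simply connected), so corridors through $\Omega$ with vanishing logarithmic capacity would be needed there too. Even in the cases where slits or corridors are geometrically available (e.g.\ $\H$ connected but multiply connected), the quantitative statement $\phi_n^\pm\to\phi$, hence $M^*_{\phi_n^\pm}\to M_\phi^*$, is exactly the hard capacity estimate you defer, so the proof is incomplete there as well. (A smaller point: the Hopf-lemma/extension argument for $u\le u_n^-$ is unnecessary and itself problematic for the degenerate equation; since $\Omega\subset\Omega_n^-$ and $u=0\le u_n^-$ on $\partial\H\times\R_+$, direct comparison in $\Omega\times\R_+$ already gives $u\le u_n^-$.)

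The paper's proof avoids the approximation of the hole altogether. It fixes two balls $B_r\subset\H\subset B_R$ once and for all, and instead of comparing from $t=0$ it restarts the evolution at a large time $T$: $u^\pm(\cdot,\cdot;T)$ are the solutions exterior to $B_r$ and $B_R$ with data $u(\cdot,T)$ (extended by zero, resp.\ restricted), so that $u^-\le u(\cdot,\cdot+T)\le u^+$. The point is that the associated weighted masses satisfy $M^-(T)\le M_\phi^*\le M^+(T)$ and $|M^+(T)-M^-(T)|\le C\int_\Omega u(\cdot,T)\le C/\log T$, because the two harmonic weights $\phi^\pm$ differ by a bounded amount and the total mass of $u(\cdot,T)$ decays. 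Thus both barriers carry asymptotic mass converging to the conserved quantity $M_\phi^*$ \emph{regardless of the geometry of $\H$}, and a final time-shift estimate ($G(\cdot,t-T_\varepsilon)$ versus $G(\cdot,t)$) transfers the far-field limit back to $u$. If you want to keep your approach, you would have to replace the geometric approximation of $\H$ by some such mass-based argument; as written, the construction of $\H_n^-$ (and of $\H_n^+$) fails for disconnected holes.
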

\begin{proof}
Let $r>0$ be such that $\overline B_r\subset\H$ and let $u_r$ be the solution to \eqref{problem} with a hole $\H_r=B_r$ and initial condition
\[
u_r(x,0)=\begin{cases} u_0(x),\quad&x\in\Omega,\\
                       0,\quad&x\in\H\setminus B_r.
\end{cases}
\]
Then, $u\le u_r$ in $\overline\Omega\times \mathbb{R}_+$, so that, by the results of \cite{Gilding-Goncerzewicz-2007},  we have~\eqref{eq:global.rate} and moreover
\begin{equation}
\label{eq:decay.mass}
\int_\Omega u(\cdot,t)\le \frac{C}{\log t}.
\end{equation}

Given $r$ as above and $R$ large so that $\overline\H\subset B_R$, let $\Omega^-=\R^2\setminus B_R$ and $\Omega^+=\R^2\setminus B_r$. For any $T>0$ we define $u^-(x,t;T)$ and $u^+(x,t;T)$ respectively as the solutions to \eqref{problem} in $\Omega^\pm\times\mathbb{R}_+$ with initial data
\[
u^-(x,0;T)=u(x,T)\quad\text{in }\Omega^-, \qquad
u^+(x,0;T)=\begin{cases} u(x,T),\quad&x\in\Omega,\\
                       0,\quad&x\in\Omega_+\setminus\Omega.
\end{cases}
\]
Then, by a straightforward comparison argument,
\begin{equation}\label{eq-comparison-asym}
u^-(x,t;T)\le u(x,t+T)\quad\mbox{in }\Omega^-\times\mathbb{R}_+,\qquad   u(x, t+T)\le u^+(x,t;T)\quad\mbox{in } \overline\Omega\times\mathbb{R}_+.
\end{equation}

Let $\phi^\pm$ be the unique functions satisfying respectively
$$
\Delta\phi^\pm=0\quad\text{in }\Omega^\pm, \qquad \phi^\pm=0 \quad\text{on }\partial\Omega^\pm,\qquad |\phi^\pm(x)-\log|x||\le C\quad\text{in }\Omega^\pm.
$$
We define $M^\pm(T)=\int_{\Omega^\pm}u^\pm(\cdot,0;T)\phi^\pm$. Note that
$M^-(T)\le M_\phi^*\le M^+(T)$ for all $T>0$. Since $0\le\phi^+-\phi^-\le C$, and $\phi^+(x)\le \log|x|+C$, using~\eqref{eq:decay.mass} we get
\[
|M^+(T)-M^-(T)|
\le \int_{\Omega^-}u(\cdot,T)(\phi^+-\phi^-)+\int_{B_R\cap\Omega} u(\cdot,T)\phi^+
\le C\int_\Omega u(\cdot,T)\le \frac{C}{\log T}.
\]
We conclude that $\lim_{T\to\infty}M^\pm(T)=M_\phi^*$.

Let $G^\pm(x,t;T)=\U(x,t;2mM^\pm(T)/\log t)$. Note that $G^-\le G\le G^+$. A direct computation shows that
$$
\begin{aligned}
0&\le (t\log t)^{\frac1m}\big(G^+(x,t;T)-G^-(x,t;T)\big)=F_{2mM^+(T)}(\widetilde\xi)-
F_{2mM^-(T)}(\widetilde\xi)\\
&\le C
\begin{cases}
(M^+(T))^{\frac1m}-(M^-(T))^{\frac1m},& 1<m\le 2,\\
\big((M^+(T))^{\frac{m-1}m}-(M^-(T))^{\frac{m-1}m}\big)^{\frac1{m-1}},& m>2.
\end{cases}
\end{aligned}
$$
On the other hand, by the results in~\cite{Gilding-Goncerzewicz-2007} for simply connected smooth holes,
$$
\lim_{t\to\infty}(t\log t)^{\frac1m}\sup_{x\in\mathcal{O}_\delta(t)}|u^\pm(x,t;T)-G^\pm(x,t;T)|=0.
$$
Hence, for every $\delta>0$, given $\varepsilon>0$, there exists $T_\varepsilon\ge0$ such that for every $T\ge T_\varepsilon$ we have
$$
\limsup_{t\to\infty}\,(t\log t)^{\frac1m}\sup_{x\in\mathcal{O}_\delta(t)}| u(x,t+T)-G(x,t)|\le \varepsilon.
$$

Now, using the monotonicity of $F_{*}$, we get
$$
\begin{array}{l}
\displaystyle(t\log t)^{\frac1m}\sup_{x\in\mathcal{O}_\delta(t)}\big|G(x,t-T_\varepsilon)-G(x,t)\big|\\
\displaystyle\qquad\le \Big(\Big(\frac{t\log t}{(t-T_\varepsilon)\log(t-T_\varepsilon)}\Big)^{\frac1m}-1\Big) F_{*}(0)+\sup_{|\tilde\xi|\ge \delta}\big|F_{*}(\tilde\xi g_\varepsilon(t))-F_{*}(\tilde\xi)\big|,
\end{array}
$$
where $\tilde \xi$ is as in \eqref{eq:limit.is.selfsimilar}, and $g_\varepsilon(t)=\Big(\frac{t}{t-T_\varepsilon}\Big)^{\frac1{2m}}\Big(\frac{\log(t-T_\varepsilon)}
{\log t}\Big)^{\frac{m-1}{2m}}$. Hence, since $F_{*}$ is uniformly continuous,
$$
\lim_{t\to\infty}(t\log t)^{\frac1m}\sup_{x\in\mathcal{O}_\delta(t)}\big|G(x,t-T_\varepsilon)-G(x,t)\big|=0.
$$

We next note that for $t$ large $\mathcal{O}_\delta(t)\subset \mathcal{O}_\delta(t-T_\varepsilon)$.  Hence,
$$
\begin{array}{l}
\displaystyle (t\log t)^{\frac1m}\sup_{x\in\mathcal{O}_\delta(t)}| u(x,t)-G(x,t)|\\
\displaystyle\quad\le
\Big(\frac{t\log t}{(t-T_\varepsilon)\log(t-T_\varepsilon)}\Big)^{\frac1m}((t-T_\varepsilon)\log (t-T_\varepsilon))^{\frac1m}\sup_{x\in\mathcal{O}_\delta(t-T_\varepsilon)}| u(x,t-T_\varepsilon+T_\varepsilon)-G(x,t-T_\varepsilon)|
\\
\displaystyle\qquad +
(t\log t)^{\frac1m}\sup_{x\in\mathcal{O}_\delta(t)}| G(x,t-T_\varepsilon)-G(x,t)|,
\end{array}
$$
which combined with the above estimates yields
$$
\limsup_{t\to\infty}\,
(t\log t)^{\frac1m}\sup_{x\in\mathcal{O}_\delta(t)}| u(x,t)-G(x,t)|\le \varepsilon.
$$
Letting $\varepsilon\to0$ we finally get the far field behavior~\eqref{eq:far.field.limit}, from where the behavior of the mass follows easily, using also~\eqref{eq:global.rate}.

The asymptotic behavior for the free boundary, formula~\eqref{eq:limit.support}, can be immediately deduced from the corresponding result for the case of simply connected smooth holes in~\cite{Gilding-Goncerzewicz-2007} and  estimate~\eqref{eq-comparison-asym}.
\end{proof}

\section{Control from above}
\label{sect:control.above}
\setcounter{equation}{0}

In this section we  construct a suitable supersolution of the porous medium equation in the inner region
$$
\mathcal{I}_{\delta,T}=\{(x,t): x\in\mathcal{I}_\delta(t),\ t\ge T\}.
$$
with $\delta>0$ small enough and $T$ big enough, and we use it to prove the \lq\lq upper part'' of Theorem~\ref{thm:main}. This supersolution is a perturbation of the asymptotic limit,
\begin{equation}
\label{eq-V}
\begin{array}{l}
V(x,t)=\eta c(t)G(x,t)w(x,t), \qquad
c(t)=1+\kappa_0\Big(\frac T t\Big)^{\mu},\qquad \eta>1,\ \kappa_0>0,\ \mu\in(0,1),
\\
w(x,t)=\left(\frac{(\phi(x))^{\nu(t)}+k}{\big(\frac{\log t}{2m}\big)^{\nu(t)}}\right)^{1/m},
\qquad \nu(t)=1-\frac1{\log t},\qquad k>0.
\end{array}
\end{equation}
We note that
\begin{equation}
\label{eq:def.A.B}
\begin{array}{l}
\partial_t V-\Delta V^m=\mathcal{A}+\mathcal{B}, \quad\text{where }\\[8pt]
\mathcal{A}=\eta c' G w+\eta c w \partial_t G-\eta^m c^m w^m\Delta G^m+\eta c G\partial_t w,\\[8pt]
\mathcal{B}=-\eta^m c^m G^m\Delta w^m-2\eta^m c^m\nabla w^m\cdot\nabla G^m.
\end{array}
\end{equation}
We will see that both $\mathcal{A}$ and $\mathcal{B}$ are nonnegative in the outer part of the inner region given by
$$
\mathcal{I}_{\delta,T}^{\mbox{\rm o}}=\Big\{(x,t)\in \mathcal{I}_{\delta,T}: |x|\ge t^{\frac1{2m}}/(\log t)^{2}\Big\}.
$$
However, though $\mathcal{B}$ is still positive in the inner part of the inner region
$$
\mathcal{I}_{\delta,T}^{\mbox{\rm i}}=\Big\{(x,t)\in \mathcal{I}_{\delta,T}: |x|\le t^{\frac1{2m}}/(\log t)^{2}\Big\},
$$
this is not necessarily the case for $\mathcal{A}$.
Hence, we will need to estimate carefully both terms to check that $\mathcal{B}$ dominates in this set.

\noindent\emph{Remark.  } Any power bigger than 2 for the logarithmic correction in the definition of the inner and the outer part of the inner region would also do the job.

\begin{lema}
\label{lem:estimates.A}
Let  $\mathcal{A}$ be defined by~\eqref{eq-V}--\eqref{eq:def.A.B}  and $\delta_*:= \xi_*\sqrt{(m-1)/(2m)}$. There are
a constant $\gamma=\gamma(m,M_\phi^*,k)>0$ and a time $T_{\mathcal{A}}=T_{\mathcal{A}}(\Omega,m,M_\phi^*,k)$ such that
\begin{equation}
\label{eq:estimate.A.below}
\mathcal{A}(x,t)\ge -\frac{\gamma \eta c(t)}{{t^{1+\frac1m}(\log t)^{\frac1m}}}\quad\text{in }\mathcal{I}_{\delta,T}\text{ for any }\delta\in(0,\delta_*)\text{ and }T\ge T_{\mathcal{A}}.
\end{equation}
Moreover, there are values $T_{\mathcal{A}}^{\rm o}=T_{\mathcal{A}}^{\rm o}(\eta,\Omega,m,M_\phi^*,k)$ and $\mu_*=\mu_*(\eta,m,M_\phi^*)>0$ such that $\mathcal{A}\ge 0$  in $\mathcal{I}_{\delta,T}^{\mbox{\rm o}}$ for all  $\delta\in(0,\delta_*)$ and $\mu\in(0,\mu_*)$ if $T\ge T_{\mathcal{A}}^{\rm o}$.
\end{lema}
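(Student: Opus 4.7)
Our strategy is to use the identity $\partial_t G - \Delta G^m = M'(t)\,\partial_M\U(x,t;M(t))$, which holds because $G(x,t) = \U(x,t;M(t))$ with $M(t) = 2mM_\phi^*/\log t$, and $\U(\cdot,\cdot;M)$ solves the porous medium equation for each fixed $M$. Substituting this into the definition of $\mathcal{A}$ yields the decomposition
$$
\mathcal{A} = \eta c' G w + \eta c w\,M'\partial_M\U + \eta c G\,\partial_t w + \eta c w\bigl(1 - \eta^{m-1}c^{m-1}w^{m-1}\bigr)\Delta G^m,
$$
in which the fourth piece will play a double role. The restriction $\delta < \delta_*$ is essential: a direct computation of $\Delta_{\tilde\xi}F_*^m$ produces a sign-determining bracket proportional to $-\xi_*^2 + m|\tilde\xi|^2/(m-1)$, which is strictly negative for $|\tilde\xi| \le \delta < \delta_* < \xi_*\sqrt{(m-1)/m}$. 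Consequently $\Delta G^m < 0$ throughout $\mathcal{I}_{\delta,T}$, with $|\Delta G^m|$ uniformly of order $t^{-1-1/m}(\log t)^{-1/m}$.

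Next, I would establish pointwise bounds on each of the four terms. A short computation using $\nu(t) = 1 - 1/\log t$ and $h(t) = (\log t)/(2m)$ gives $\partial_t w = -w/(mt\log t) + O(w\log\log t/(t\log^2 t))$, while explicit differentiation of the Barenblatt profile yields $|\partial_M \U| = O(t^{-1/m}(\log t)^{(m-1)/m})$ in $\mathcal{I}_{\delta,T}$. Combined with the a priori bounds $G \le F_*(0)(t\log t)^{-1/m}$ and $w \le C$, these imply $|\eta c' G w| = O(\eta(T/t)^\mu/(t^{1+1/m}(\log t)^{1/m}))$ and $|\eta c w M'\partial_M\U| + |\eta c G \partial_t w| = O(\eta/(t^{1+1/m}(\log t)^{1+1/m}))$, while the fourth term has absolute value at most $C\eta c/(t^{1+1/m}(\log t)^{1/m})$ (regardless of its sign). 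Adding these estimates yields the first claim, $\mathcal{A} \ge -\gamma\eta c(t)/(t^{1+1/m}(\log t)^{1/m})$, once $T \ge T_{\mathcal{A}}$ is large enough for the asymptotic expansions to hold.

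For the stronger statement $\mathcal{A} \ge 0$ in the outer part $\mathcal{I}^{\mbox{\rm o}}_{\delta,T}$, the key ingredient is that $w^m(x,t) \to 1$ uniformly on that set. Indeed, from $|\phi(x) - \log|x|| \le C$ (Section~\ref{sect:stationary}) and the lower bound $|x| \ge t^{1/(2m)}/(\log t)^2$ in $\mathcal{I}^{\mbox{\rm o}}$, one deduces $\phi(x)/h(t) \to 1$ uniformly as $t\to\infty$; combining this with $k/h^{\nu(t)} \to 0$ yields $w^m \to 1$. Thus $\eta^{m-1}c^{m-1}w^{m-1} \ge 1 + c_0$ for some $c_0 = c_0(\eta) > 0$ provided $T \ge T_{\mathcal{A}}^{\mbox{\rm o}}$ is large, which makes the fourth term strictly positive and of size $\eta c w \cdot c_0 \cdot |\Delta G^m| \ge C/(t^{1+1/m}(\log t)^{1/m})$. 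Choosing $\mu < \mu_*(\eta)$ small enough that the $c'$-term is strictly smaller than this positive contribution (using the gain factor $(T/t)^\mu$), and taking $T$ large so that the remaining two terms are of strictly smaller order, we conclude $\mathcal{A} \ge 0$.

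The main technical obstacle will be the uniform control of the convergence $w^m \to 1$ on $\mathcal{I}^{\mbox{\rm o}}_{\delta,T}$ together with the careful bookkeeping of the logarithmic corrections arising from $\nu(t) \ne 1$ and from the $M$-dependence of the Barenblatt profile. Both rely in an essential way on the quantitative estimates $|\phi(x) - \log|x|| \le C$ and $|x|\,|\nabla\phi(x)| \le C$ from Section~\ref{sect:stationary}. A secondary point is that the lower bound on $|\Delta G^m|$ degenerates as $\delta \nearrow \delta_*$, but it is uniform for any fixed $\delta$ strictly below $\delta_*$, which is exactly the hypothesis of the lemma.
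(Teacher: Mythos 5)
Your proposal is correct and follows essentially the same route as the paper: writing $\partial_t G=\Delta G^m+M'(t)\,\partial_M\U$ is exactly the paper's explicit computation of $\partial_t G$, the grouping of the two $\Delta G^m$ contributions into the factor $1-(\eta c w)^{m-1}$ is the paper's key step, and the positivity on $\mathcal{I}_{\delta,T}^{\rm o}$ via $w\to1$ (from $\phi(x)\approx\log|x|\approx\tfrac{\log t}{2m}$ there) together with a small choice of $\mu_*$ to beat the $c'$-term is precisely the paper's argument. One cosmetic remark: the two-sided bound $|\Delta G^m|\asymp t^{-1-\frac1m}(\log t)^{-\frac1m}$ is in fact uniform over all $\delta\in(0,\delta_*)$ and does not degenerate as $\delta\nearrow\delta_*$, since the bracket $1-\tfrac{m}{m-1}\tfrac{|\tilde\xi|^2}{\xi_*^2}$ equals $\tfrac12$ at $|\tilde\xi|=\delta_*$; degeneration would only occur at $\xi_*\sqrt{(m-1)/m}$.
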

\begin{proof} 
Straightforward computations show that, for any $\delta\in(0,\xi_*)$ and $T\ge \mbox{\rm e}$,
\begin{equation}
\label{eq:derivatives}
\begin{array}{l}
c'(t)\ge -\mu \frac{c(t)}t,
\\
\partial_t G(x,t)=\Delta G^m(x,t)-\frac{(2mM_\phi^*)^{\frac1m}}{(t\log t)^{1+\frac1m}}\big(1-\frac{|\tilde \xi|^2}{\xi_*^2}\big)_+^{\frac1{m-1}-1},\\
\Delta G^m(x,t)=-\frac1{m t^{1+\frac1m}(\log t)^{\frac1m}}\Big(\frac{M_\phi^*}{2\pi}\Big)^{\frac1m} \big(1-\frac{|\tilde \xi|^2}{\xi_*^2}\big)_+^{\frac1{m-1}-1}\big(1-{\textstyle\frac m{m-1}}\frac{|\tilde\xi|^2}{\xi_*^2}\big),\\
\partial_t w(x,t)=  \frac{w(x,t)}m\Big(\frac{(\phi(x))^{\nu(t)}\log\phi(x)}{\big((\phi(x))^{\nu(t)}+k\big)t(\log t)^{2}}
-\frac{\log\big(\frac{\log t}{2m}\big)}{t(\log t)^{2}}-\frac{\nu(t)}{t\log t}\Big),
\end{array}
\qquad\text{in }\mathcal{I}_{\delta,T}.
\end{equation}
Moreover, there is a time $T_w=T_w(\Omega,m,k)$ such that $w\le 2$ in $\mathcal{I}_{\delta,T}$ if $T\ge T_w$. Thus, there are positive constants $\gamma_1=\gamma_1(m,M_\phi^*,k)$ and $\gamma_2=\gamma_2(m,M_\phi^*)$  such that
\begin{equation}
\label{eq:estimates.w_t.Delta G^m}
\partial_t w(x,t)\ge -\frac{\gamma_1}{t\log t},\qquad
0>\Delta G^m(x,t)>-\frac{\gamma_2}{t^{1+\frac1m}(\log t)^{\frac1m}}\qquad\text{in }\mathcal{I}_{\delta,T},
\end{equation}
for any $\delta\in(0,\delta_*)$ and $T\ge T_w$. Since $0\le G(x,t)<F_*(0)/(t\log t)^{\frac1m}$, we easily get~\eqref{eq:estimate.A.below} if $T\ge T_{\mathcal{A}}\ge T_w$, with $T_{\mathcal{A}}=T_{\mathcal{A}}(\Omega,m,M_\phi^*,k)$ large enough.

The idea to prove the positivity of $\mathcal{A}$ in $\mathcal{I}_{\delta,T}^{\mbox{\rm o}}$ is that the good term $-\eta^m c^m w^m\Delta G^m$ is not only positive there, but also big enough to compensate for the negative leading order terms if $\mu$ is small. Indeed, since $\phi(x)\ge\log|x|-C$ and
\[
\log |x|\ge \frac{\log t}{2m}-2\log\log t\quad\mbox{in }\mathcal{I}_{\delta,T}^{\mbox{\rm o}},
\]
there exists a time $T_{\eta,m}$ depending only on $\eta$ and $m$ such that
\[
(\eta w(x,t))^{m-1}\ge 1+\frac{\eta^{m-1}-1}{2}\quad\text{for }t\ge T_{\eta,m}.
\]
Hence, taking $T\ge\max(T_{\mathcal{A}}, T_{\eta,m})$, since $c(t)>1$ for every $t\ge T$, we get
$$
\mathcal{A}(x,t)\ge \eta c(t) w(x,t)\Big(\frac{\frac{\gamma_2}2(\eta^{m-1}-1)-\mu F_*(0)}{t^{1+\frac1m}(\log t)^{\frac1m}}+o\Big(\frac1{t^{1+\frac1m}(\log t)^{\frac1m}}\Big)\Big)
\quad\text{in }\mathcal{I}_{\delta,T}^{\mbox{\rm o}}.
$$
The result follows just taking $\mu_*=\frac{\gamma_2}{4F_*(0)}(\eta^{m-1}-1)$, and then $T_{\mathcal{A}}^{\rm o}\ge \max(T_{\mathcal{A}}, T_{\eta,m})$ big enough.
\end{proof}

\begin{lema}
\label{lem:estimates.B}
Let  $\mathcal{B}$ be defined by~\eqref{eq-V}--\eqref{eq:def.A.B}. There is a time $T_{\mathcal{B}}^{\rm o}=T_{\mathcal{B}}^{\rm o}(\Omega,m)$ such that $\mathcal{B}\ge 0$ in $\mathcal{I}_{\delta,T}^{\mbox{\rm o}}(s)$ for all $T\ge T_{\mathcal{B}}^{\rm o}$ and $\delta\in(0,\xi_*)$. Moreover, there are a constant $\varpi=\varpi(m,M_\phi^*)$ and a time $T_{\mathcal{B}}^{\rm i}=T_{\mathcal{B}}^{\rm i}(m,M_\phi^*)$ such that
for all $\delta\in(0,\xi_*)$ and $T\ge T_{\mathcal{B}}^{\rm i}$,
\begin{equation}
\label{eq:estimate.B.below}
\displaystyle\mathcal{B}(x,t)\ge \frac{\varpi\eta c(t)}{t^{1+\frac1m}}
\quad\text{in }\mathcal{I}_{\delta,T}^{\mbox{\rm i}}.
\end{equation}
\end{lema}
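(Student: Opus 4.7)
My plan is to exploit the harmonicity of $\phi$, which forces $\Delta w^m$ to have a definite sign, together with the fact that $\tilde\xi\to 0$ on $\mathcal{I}^{\mbox{\rm i}}_{\delta,T}$, in order to extract a sharp lower bound from the first summand of $\mathcal{B}$ while controlling the cross term as a lower-order correction. Starting from $w^m(x,t)=(\phi^{\nu(t)}+k)/(\log t/(2m))^{\nu(t)}$ and using $\Delta\phi\equiv 0$ in $\Omega$, a direct computation gives
\[
\nabla w^m = \frac{\nu\,\phi^{\nu-1}\nabla\phi}{(\log t/(2m))^\nu},\qquad
\Delta w^m = \frac{\nu(\nu-1)\,\phi^{\nu-2}|\nabla\phi|^2}{(\log t/(2m))^\nu},
\]
and since $\nu(t)\in(0,1)$ this makes $\Delta w^m<0$, so the first summand $-\eta^m c^m G^m\Delta w^m$ is nonnegative throughout $\mathcal{I}_{\delta,T}$. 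Moreover $\nabla G^m$ is a negative radial multiple of $x$ on the support of $F_*(\tilde\xi)$, hence $\nabla w^m\cdot\nabla G^m$ carries the sign of $-\,x\cdot\nabla\phi$.

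On the outer part $\mathcal{I}^{\mbox{\rm o}}_{\delta,T}$ the constraint $|x|\ge t^{1/(2m)}/(\log t)^2$ forces $|x|\ge R$ as soon as $T$ is large enough, so the estimate $x\cdot\nabla\phi\ge 1/2$ from Section~\ref{sect:stationary} applies. Consequently $\nabla w^m\cdot\nabla G^m\le 0$, the cross term $-2\eta^m c^m\nabla w^m\cdot\nabla G^m$ is also nonnegative, and $\mathcal{B}\ge 0$ follows.

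The bulk of the argument is the quantitative lower bound on $\mathcal{I}^{\mbox{\rm i}}_{\delta,T}$. Here $|\tilde\xi|=O((\log t)^{-(3m+1)/(2m)})\to 0$, so $G^m(x,t)\ge F_*^m(0)/(2t\log t)$ for $T$ large. Combined with $\nu(1-\nu)\ge 1/(2\log t)$ and $(\log t/(2m))^\nu\le \log t/(2m)$, this yields
\[
-\eta^m c^m G^m\Delta w^m \;\ge\; \frac{C_1\,\eta^m c^m}{t(\log t)^3}\,\phi^{\nu-2}|\nabla\phi|^2,\qquad C_1=C_1(m,M_\phi^*)>0.
\]
Writing $\phi^{\nu-2}=\phi^{-1}\phi^{-1/\log t}$, with $\phi^{-1/\log t}\to 1$ uniformly on any set where $\phi$ stays bounded, the task reduces to bounding $|\nabla\phi|^2/\phi$ below by a multiple of $(\log t)^3/t^{1/m}$. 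For $|x|\ge R$ the estimates $|\nabla\phi|\ge 1/(2|x|)$ and $\phi\le\log|x|+C$ combined with $|x|^2\le t^{1/m}/(\log t)^4$ and $\log|x|\le \log t/m$ give exactly this. For $|x|\le R$ the hypothesis $\nabla\phi\neq 0$ on $\overline\Omega$ yields a positive uniform lower bound for $|\nabla\phi|^2/\phi$ away from $\partial\H$, while near $\partial\H$ the factor $\phi^{\nu-2}$ blows up and $|\nabla\phi|^2$ stays away from zero, both dominating $(\log t)^3/t^{1/m}$ for $T$ large.

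It remains to absorb the cross term. Using $|\nabla w^m|\le C\phi^{\nu-1}|\nabla\phi|/(\log t)^\nu$ and the explicit form of $\nabla G^m$, together with the uniform bound $|x||\nabla\phi|\le\mathrm{const}$ on $\overline\Omega$,
\[
|\nabla w^m\cdot\nabla G^m|\le \frac{C'}{t^{1+1/m}(\log t)^{1+1/m}},
\]
which is of order $(\log t)^{-(1+1/m)}$ times the main term and is therefore absorbed. Collecting constants and using $\eta^m c^m\ge \eta c$ (since $\eta>1$ and $c\ge 1$) produces the desired $\mathcal{B}\ge \varpi\eta c(t)/t^{1+1/m}$. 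The main obstacle in this plan is the tight bookkeeping in the inner region: the factor $1/(t(\log t)^3)$ extracted from $G^m$, $\nu(1-\nu)$ and $(\log t/(2m))^\nu$ must be compensated exactly by the $(\log t)^3/t^{1/m}$ coming from $|\nabla\phi|^2/\phi$ at the outer edge of $\mathcal{I}^{\mbox{\rm i}}_{\delta,T}$, and this matching is what dictates the exponent $2$ in the logarithmic cutoff used to define the inner region.
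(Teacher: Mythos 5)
Your proposal is correct and follows essentially the same route as the paper: the explicit formulas for $\nabla G^m$, $\nabla w^m$, $\Delta w^m$, the sign of the cross term in $\mathcal{I}_{\delta,T}^{\mbox{\rm o}}$ via $x\cdot\nabla\phi\ge\frac12$ for $|x|\ge R$ together with $\Delta w^m\le 0$, and in $\mathcal{I}_{\delta,T}^{\mbox{\rm i}}$ the lower bound $-G^m\Delta w^m\gtrsim t^{-1-\frac1m}$ coming from $|\nabla\phi|\gtrsim 1/|x|$ (your accounting of the four logarithms against the $(\log t)^4$ from $|x|^{-2}$ is exactly the matching the paper performs), with the cross term absorbed as a correction smaller by a factor $(\log t)^{-1-\frac1m}$. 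The only caveat, which you share with the paper's own write-up, is that the stated pointwise bound on $|\nabla w^m\cdot\nabla G^m|$ uses $\phi^{\nu(t)-1}\lesssim (\log t)^{\nu(t)-1}$, which degenerates in an exponentially thin neighbourhood of $\partial\Omega$ where $\phi$ is tiny; there the term $-G^m\Delta w^m$ blows up even faster, so the conclusion is unaffected.
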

\begin{proof}
Easy computations yield, for any $\delta\in(0,\xi_*)$ and $(x,t)\in\mathcal{I}_{\delta,T}$,
\begin{equation}
\label{eq:grad.laplac.Gw}
\begin{array}{l}
\nabla G^m(x,t)=-\frac1{2m t^{1+\frac1m}(\log t)^{\frac1m}}\Big(\frac{M_\phi^*}{2\pi}\Big)^{\frac1m} \big(1-\frac{|\tilde \xi|^2}{\xi_*^2}\big)_+^{\frac1{m-1}}x,
\\[8pt]
\nabla w^m(x,t)=\nu(t)\frac{(\phi(x))^{\nu(t)-1}}{(\frac{\log t}{2m})^{\nu(t)}}\nabla\phi(x),
\\[8pt]
\Delta w^m(x,t)=\nu(t)(\nu(t)-1)\frac{(\phi(x))^{\nu(t)-2}}{(\frac{\log t}{2m})^{\nu(t)}}
|\nabla \phi(x)|^2.
\end{array}
\end{equation}
Moreover, given $R$ as in estimate~\eqref{eq-bound-nabla-phi}, there is a time $T_{\mathcal{B}}^{\rm o}=T_{\mathcal{B}}^{\rm o}(\Omega,m)$  such that $|x|>R$ in $\mathcal{I}_{\delta,T}^{\mbox{\rm o}}$ for any $T\ge T_{\mathcal{B}}^{\rm o}$, which immediately implies, using that estimate, that  $\nabla w^m\cdot\nabla G^m\le0$ there. On the other hand,
$\Delta w^m\le 0$, since $\nu(t)\le1$, and hence $\mathcal{A}\ge0$.

In order to estimate the behavior in the inner part of the inner region, we notice that there is a time $T_\phi=T_\phi(m,M_\phi^*)$ such that $|\tilde\xi|<\frac{\xi_*}2$,
$ \phi(x)\le\frac{\log t}m$ and $\frac12\le\nu(t)< 1$ in $\mathcal{I}_{\delta,T}^{\rm i}$ if $T\ge T_\phi$.  Hence, using also that $|\nabla \phi(x)|$ is comparable to $1/|x|$ in $\Omega$, we easily get that there are positive constants $\varpi_1=\varpi_1(m,M_\phi^*)$ and $\varpi_2=\varpi_2(m,M_\phi^*)$ such that
$$
-(G^m\Delta w^m)(x,t)\ge\frac{\varpi_1}{t^{1+\frac1m}},\quad-2(\nabla G^m\cdot\nabla w^m)(x,t) \ge-\frac{\varpi_2}{(t\log t)^{1+\frac1m}}
\quad\text{in }\mathcal{I}_{\delta,T}^{\rm i},
$$
if $\delta\in(0,\xi_*)$ and $T\ge T_\phi$. Since $\eta, c(t)>1$, we finally get~\eqref{eq:estimate.B.below} if $T\ge T_{\mathcal{B}}^{\rm i}\ge T_\phi$, with $T_{\mathcal{B}}^{\rm i}=T_{\mathcal{B}}^{\rm i}(m,M_\phi^*)$ large enough.
\end{proof}

Combining the estimates in Lemmas~\ref{lem:estimates.A} and~\ref{lem:estimates.B},  we immediately get that $V$ is a supersolution of the equation. Note that $\delta_*<\xi_*$.
\begin{coro}\label{prop-supersolution}
Let  $V$ be defined by~\eqref{eq-V} and $\delta_*$ and $\mu_*$ as in Lemma~\ref{lem:estimates.A}. There is a time $T_*=T_*(\eta,\Omega,m,M_\phi^*,k)$ such that
\[
\partial_t V-\Delta V^m\ge0\quad\mbox{in }\mathcal{I}_{\delta,T},\text{ for all }\delta\in(0,\delta_*),\ \mu\in(0,\mu_*),\text{ and }T\ge T_*.
\]
\end{coro}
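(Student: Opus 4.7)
The plan is to simply splice together the two preceding lemmas via the decomposition $\partial_tV-\Delta V^m=\mathcal{A}+\mathcal{B}$ from~\eqref{eq:def.A.B}, exploiting that the partition $\mathcal{I}_{\delta,T}=\mathcal{I}_{\delta,T}^{\mathrm{o}}\cup\mathcal{I}_{\delta,T}^{\mathrm{i}}$ was designed precisely so that at each point at least one of the two pieces is unambiguously positive and dominant.

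First I would handle the outer part $\mathcal{I}_{\delta,T}^{\mathrm{o}}$, where the situation is immediate: Lemma~\ref{lem:estimates.A} (second statement) gives $\mathcal{A}\ge 0$ provided $\mu\in(0,\mu_*)$ and $T\ge T_{\mathcal{A}}^{\mathrm{o}}$, while Lemma~\ref{lem:estimates.B} (first statement) gives $\mathcal{B}\ge 0$ for $T\ge T_{\mathcal{B}}^{\mathrm{o}}$. Their sum is therefore nonnegative.

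Next I would deal with the inner part $\mathcal{I}_{\delta,T}^{\mathrm{i}}$, where $\mathcal{A}$ need not be positive. Here Lemma~\ref{lem:estimates.A} (first statement) gives the one-sided bound $\mathcal{A}\ge -\gamma\eta c(t)\,t^{-1-1/m}(\log t)^{-1/m}$, and Lemma~\ref{lem:estimates.B} (second statement) gives the matching positive bound $\mathcal{B}\ge \varpi\eta c(t)\,t^{-1-1/m}$ whenever $T\ge T_{\mathcal{B}}^{\mathrm{i}}$. Combining,
\[
\mathcal{A}+\mathcal{B}\;\ge\;\frac{\eta c(t)}{t^{1+\frac1m}}\left(\varpi-\frac{\gamma}{(\log t)^{\frac1m}}\right),
\]
which is nonnegative as soon as $(\log t)^{1/m}\ge \gamma/\varpi$. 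This dictates a final lower bound $T_{\mathrm{in}}=\exp\bigl((\gamma/\varpi)^m\bigr)$ on the initial time.

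To conclude, I would set $T_*:=\max\bigl(T_{\mathcal{A}},T_{\mathcal{A}}^{\mathrm{o}},T_{\mathcal{B}}^{\mathrm{o}},T_{\mathcal{B}}^{\mathrm{i}},T_{\mathrm{in}}\bigr)$, which depends only on $\eta,\Omega,m,M_\phi^*,k$ as required (since $\gamma,\varpi,\mu_*$ do, and $\delta$ enters only through $\delta<\delta_*<\xi_*$). For any $\delta\in(0,\delta_*)$, $\mu\in(0,\mu_*)$ and $T\ge T_*$, the estimates above hold simultaneously in the two pieces of $\mathcal{I}_{\delta,T}$, so $\partial_tV-\Delta V^m=\mathcal{A}+\mathcal{B}\ge 0$ throughout $\mathcal{I}_{\delta,T}$. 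There is no genuine obstacle here; the real work is already contained in Lemmas~\ref{lem:estimates.A} and~\ref{lem:estimates.B}, and this corollary is purely bookkeeping.
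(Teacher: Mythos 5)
Your proof is correct and is exactly the argument the paper intends (the paper simply states that combining Lemmas~\ref{lem:estimates.A} and~\ref{lem:estimates.B} gives the result "immediately"): nonnegativity of both terms on $\mathcal{I}_{\delta,T}^{\mathrm{o}}$, and domination of the $O(t^{-1-1/m}(\log t)^{-1/m})$ lower bound for $\mathcal{A}$ by the $O(t^{-1-1/m})$ lower bound for $\mathcal{B}$ on $\mathcal{I}_{\delta,T}^{\mathrm{i}}$. Your bookkeeping of the time thresholds and their dependencies is also consistent with the statement.
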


In order to prove that $u\le V$ in $I_{\delta,T}$, it is then just enough to prove that this inequality holds in the parabolic boundary of the set. The ordering in the outer boundary will come from the far field behavior. It is here  where  we are performing the matching.

\begin{prop}\label{prop-control from above} Let $u$ be a solution to \eqref{problem},  $\eta>1$, $V$ as in~\eqref{eq-V}, and $\delta_*$ and $\mu_*$ as in Lemma~\ref{lem:estimates.A}. Given $k>0$ and $\delta\in(0,\delta_*)$, there is a time $T^+=T^+(\eta,\delta,\Omega,m,M_\phi^*,k)>0$ such that for any $T\ge T^+$ there is a value $\kappa_0>0$ for which, for all $\mu\in(0,\mu_*)$, $u\le V$ in $\mathcal{I}_{\delta,T}$.
\end{prop}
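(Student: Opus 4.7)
My plan is to apply the parabolic comparison principle to $u$ and $V$ in the space-time region $\mathcal{I}_{\delta,T}$. Corollary~\ref{prop-supersolution} guarantees that $V$ is a supersolution there whenever $T\ge T_*$, $\delta\in(0,\delta_*)$, and $\mu\in(0,\mu_*)$, so what remains is to verify the inequality $u\le V$ on the parabolic boundary of $\mathcal{I}_{\delta,T}$. That parabolic boundary is the union of the initial slice $\{t=T\}\cap\mathcal{I}_\delta(T)$, the fixed hole boundary $\partial\mathcal{H}\times[T,\infty)$, and the outer lateral curve $\Sigma:=\{(x,t):|x|=\delta t^{1/(2m)}(\log t)^{-(m-1)/(2m)},\,t\ge T\}$. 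On $\partial\mathcal{H}$ the inequality is free, because $u=0$ by the boundary condition in~\eqref{problem}, whereas $V>0$ thanks to the additive $k>0$ in $w$.

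The matching on $\Sigma$ is the delicate part, and is precisely what the factor $\eta>1$ in the ansatz is designed to absorb. On $\Sigma$ one has $\widetilde\xi=\delta$, so $G(x,t)=F_*(\delta)(t\log t)^{-1/m}$. Using $|\phi(x)-\log|x||\le C$ from~\eqref{eq:stationary.problem} together with $\log|x|=(\log t)/(2m)-((m-1)/(2m))\log\log t+\log\delta$ on $\Sigma$, we see that $\phi(x)/(\log t/(2m))\to 1$ uniformly on $\Sigma$, and consequently $w(x,t)\to 1$ there. Writing
\[
V(x,t)-u(x,t)=G(x,t)\bigl[\eta c(t) w(x,t)-1\bigr]+\bigl[G(x,t)-u(x,t)\bigr]
\]
and multiplying by $(t\log t)^{1/m}$, the far field convergence~\eqref{eq:far.field.limit} (applicable because $\Sigma\subset\mathcal{O}_\delta(t)$) shows that the right-hand side tends to $F_*(\delta)(\eta-1)+o(1)>0$. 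This yields a time $T^+=T^+(\eta,\delta,\Omega,m,M_\phi^*,k)\ge T_*$ beyond which $u\le V$ on $\Sigma$ for every $t\ge T\ge T^+$, \emph{regardless of} $\kappa_0>0$ (which only enters through the factor $c(t)\ge 1$).

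With $T\ge T^+$ fixed, it only remains to verify $u(\cdot,T)\le V(\cdot,T)$ on $\mathcal{I}_\delta(T)$; now $\kappa_0$ enters in earnest because $c(T)=1+\kappa_0$. Discarding the nonnegative term $\phi^{\nu(T)}$ in $w(x,T)^m$ and using $\nu(T)\to 1$ one obtains $w(x,T)\ge (2mk/\log T)^{1/m}(1+o_T(1))$, while the radial monotonicity of $F_*$ and $|\widetilde\xi|\le\delta<\xi_*$ on $\mathcal{I}_\delta(T)$ give $G(x,T)\ge F_*(\delta)(T\log T)^{-1/m}$. Hence
\[
V(x,T)\ge \eta(1+\kappa_0)F_*(\delta)(2mk)^{1/m}T^{-1/m}(\log T)^{-2/m}(1+o_T(1)),
\]
whereas the global decay~\eqref{eq:global.rate} gives $u(x,T)\le C_0(T\log T)^{-1/m}$. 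Taking $\kappa_0$ comparable to $(\log T)^{1/m}$ (depending on $T$, but allowed by the statement) forces $u(\cdot,T)\le V(\cdot,T)$.

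The main obstacle is the matching on $\Sigma$: since both $c(t)$ and $w(x,t)$ tend to $1$ uniformly there as $t\to\infty$, the whole gap separating $V$ from $u$ must be provided by the constant $\eta-1>0$ together with the sharp convergence rate in~\eqref{eq:far.field.limit}; there is no algebraic slack to spare. This dictates the order of the choices — $\eta$, $k$, $\delta$ first, then $\mu$ and the threshold $T^+$, and finally $\kappa_0=\kappa_0(T)$ that scales like $(\log T)^{1/m}$ — and explains why $\kappa_0$ is allowed to depend on $T$ in the proposition.
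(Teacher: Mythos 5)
Your proposal is correct and follows essentially the same route as the paper: verify the supersolution property via Corollary~\ref{prop-supersolution}, check $u=0\le V$ on $\partial\H$, use the far-field convergence~\eqref{eq:far.field.limit} together with $w\to1$ and $\eta>1$ to handle the outer lateral curve $\Sigma$, and then pick $\kappa_0$ (depending on $T$) large enough to dominate on the initial slice, dropping $\phi^{\nu(T)}\ge0$ from $w$. The only minor difference is at the initial slice: the paper uses the crude bound $u(\cdot,T)\le\|u_0\|_{L^\infty}$ and thus a larger $\kappa_0\gtrsim (T\log T)^{1/m}(\log T)^{1/m}$, whereas you invoke the sharp global decay~\eqref{eq:global.rate} to get $\kappa_0\sim(\log T)^{1/m}$; both are admissible since the statement only requires some $\kappa_0>0$ for each $T$.
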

\begin{proof}
We first note that $u=0\le V$ on $\partial\Omega\times\mathbb{R}_+$. As for the outer boundary, we have from~\eqref{eq:far.field.limit} that there exists $T_{\eta,\delta,m}>0$ such that
$$
u(x,t)\le G(x,t)+\frac{(\eta-1)F_*(\delta_*)}{2 (t\log t)^{\frac1m}}\le \Big(1+\frac{\eta-1}2\Big)G(x,t)\quad\text{for }|x|=\frac{\delta t^{\frac1{2m}}}{(\log t)^{\frac{m-1}{2m}}},\ t\ge T_{\eta,\delta,m}.
$$
On the other hand,  there is a time $T_{\Omega,\eta,\delta}>0$ such that
$$
\eta w(x,t)\ge 1+\frac{\eta-1}{2} \quad\text{for }|x|=\frac{\delta t^{\frac1{2m}}}{(\log t)^{\frac{m-1}{2m}}},\ t\ge T_{\Omega,\eta,\delta}.
$$
Since $c(t)\ge1$, we conclude that
\[
u(x,t)\le V(x,t)\quad\text{for }|x|=\frac{\delta t^{\frac1{2m}}}{(\log t)^{\frac{m-1}{2m}}},\ t\ge \max(T_{\eta,\delta,m},T_{\Omega,\eta,\delta}).
\]

Given $T\ge T^+:=\max(T_*,T_{\eta,\delta,m},T_{\Omega,\eta,\delta})$, where $T_*$ is the time given by Corollary~\ref{prop-supersolution},
\[
V(x,T)\ge\frac{(1+\kappa_0)k^{\frac1m}F_*(\delta_*)}{(T\log T)^{\frac1m}(\frac{\log T}{2m})^{\frac{\nu(T)}m}}\ge \|u_0\|_{L^\infty}\ge u(x,T),
\]
if $\kappa_0>0$ is big enough.
The result follows then from the comparison principle.
\end{proof}
The \lq\lq upper part'' of Theorem~\ref{thm:main} is now easy.
\begin{teo}
\label{thm:above}
Assume the hypotheses of Theorem~\ref{thm:main}. Given $\varepsilon>0$, there exists a time $T_\varepsilon$ such that,  for all $\delta\in(0,\delta_*)$,
\[
\mathcal{M}(x,t):=t^{\frac1m}(\log t)^{\frac2m}\frac{\Big(u(x,t)-\Big(\frac{2m\phi(x)}
{\log t}\Big)^{\frac1m}G(x,t)\Big)}{\big(\log(|x|+\mbox{ \rm e})\big)^{\frac1m}}\le\ep\quad\text{in }\mathcal{I}_{\delta,T_\varepsilon}.
\]
\end{teo}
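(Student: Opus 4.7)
My plan is to derive the theorem from the supersolution bound $u \le V$ on $\mathcal{I}_{\delta,T}$ provided by Proposition~\ref{prop-control from above}, by choosing the free parameters $\eta>1$ and $k>0$ small in terms of $\varepsilon$. The guiding heuristic is that $V = \eta c(t) G w$ formally collapses to $(2m\phi/\log t)^{1/m}G$ in the limit $\eta\to 1^+$, $k\to 0^+$, $c(t)\to 1$, $\nu(t)\to 1$; the task is to quantify this convergence in the weighted norm defining $\mathcal{M}$.

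First I would write
\[
u(x,t) - \Bigl(\tfrac{2m\phi(x)}{\log t}\Bigr)^{1/m} G(x,t) \;\le\; V(x,t) - \Bigl(\tfrac{2m\phi(x)}{\log t}\Bigr)^{1/m} G(x,t) \;=\; G(x,t)\bigl[\eta c(t) w(x,t) - (2m\phi/\log t)^{1/m}\bigr],
\]
and then factor out the appropriate power of $\log t$. Using the identity $t^{1/m}(\log t)^{2/m} G(x,t) = (\log t)^{1/m} F_*(\tilde\xi) \le (\log t)^{1/m}F_*(0)$, the problem reduces to bounding
\[
D(x,t) := \eta c(t) a(t)\bigl(\phi(x)^{\nu(t)}+k\bigr)^{1/m} - (2m\phi(x))^{1/m}
\]
by $C\varepsilon (\log(|x|+\mathrm e))^{1/m}$ on $\mathcal{I}_{\delta,T_\varepsilon}$, where $a(t):=(2m)^{1/m}\bigl(\tfrac{\log t}{2m}\bigr)^{1/(m\log t)}\to(2m)^{1/m}$ and $c(t)\to 1$ as $t\to\infty$.

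The next step is to use the subadditivity $(\phi^{\nu}+k)^{1/m}\le \phi^{\nu/m}+k^{1/m}$ (valid since $m\ge 1$) to split $D$ in two. The $k$-contribution $\eta c(t) a(t) k^{1/m}$ is bounded by a constant times $k^{1/m}$ for $t$ large, and since $\log(|x|+\mathrm e)\ge 1$ throughout $\Omega$ it is absorbed into $C\varepsilon (\log(|x|+\mathrm e))^{1/m}$ by choosing $k$ small. For the remaining term $\eta c a \phi^{\nu/m}-(2m\phi)^{1/m}$ I would treat the regimes $\phi(x)\ge 1$ and $\phi(x)\le 1$ separately. The first is immediate because $\phi^{\nu/m} = \phi^{1/m}\phi^{-1/(m\log t)} \le \phi^{1/m}$. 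The second follows by maximizing the one-variable function $s\mapsto \eta c a s^{\nu}-(2m)^{1/m}s$ on $[0,1]$, which one checks is monotone increasing for $t$ large (the interior critical point $s^*=(\nu\eta c a/(2m)^{1/m})^{\log t}$ escapes to $+\infty$), so its maximum is $\eta c a-(2m)^{1/m}$, attained at $s=1$. Combined with the elementary bound $\phi(x)\le C\log(|x|+\mathrm e)$ throughout $\overline\Omega$ (a consequence of $|\phi(x)-\log|x||\le C$), both regimes yield
\[
\eta c(t) a(t) \phi^{\nu/m} - (2m\phi)^{1/m} \;\le\; C\bigl(\eta c(t) a(t) - (2m)^{1/m}\bigr)(\log(|x|+\mathrm e))^{1/m}.
\]

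Since $\eta c(t) a(t) - (2m)^{1/m} \to (\eta-1)(2m)^{1/m}$ as $t\to\infty$, I would first pick $\eta-1$ and $k$ small enough (depending only on $\varepsilon$, $m$, $M_\phi^*$, $\Omega$) that the resulting limit bound lies below $\varepsilon/2$, and then take $T_\varepsilon\ge T^+(\eta,\delta,\Omega,m,M_\phi^*,k)$ large enough so that the residual $t$-dependent corrections from $c(t)$ and $a(t)$ contribute at most a further $\varepsilon/2$. The main subtle point is the near-boundary regime where $\phi(x)$ is small and $|x|$ stays bounded: there the weight $(\log(|x|+\mathrm e))^{1/m}$ is only of order one, so the smallness of $\mathcal{M}$ must come entirely from the choices of $\eta-1$ and $k$. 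This is precisely what forces the case-splitting above and explains why the $k$-perturbation was built into the definition of $V$ from the outset.
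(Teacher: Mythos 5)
Your argument is correct and follows essentially the same route as the paper's: both rest on Proposition~\ref{prop-control from above} with $\eta-1$ and $k$ chosen small in terms of $\varepsilon$, the subadditivity of $s\mapsto s^{1/m}$ to absorb the $k$-perturbation, the bound $\phi(x)\le C\log(|x|+\mathrm{e})$ together with $t^{\frac1m}(\log t)^{\frac2m}G=(\log t)^{\frac1m}F_*(\tilde\xi)$, and the convergence of $c(t)$ and the logarithmic correction factors to $1$. The only difference is one of bookkeeping: where the paper splits the error into four explicit terms (its last term handling the $\nu(t)$ versus $1$ discrepancy), you treat the same discrepancy by an elementary one-variable maximization split according to $\phi\gtrless 1$, which if anything makes that step more explicit.
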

\begin{proof}
We decompose $\mathcal{M}$ as
\begin{equation}
\label{eq:decomposition.M}
\begin{array}{l}
\mathcal{M}(x,t)=\frac{t^{\frac1m}(\log t)^{\frac2m}}{\big(\log(|x|+\mbox{\rm e})\big)^{\frac1m}}\Big(\big(u(x,t)-c(t)w(x,t)G(x,t)\big)+(c(t)-1)w(x,t)G(x,t)
\\[10pt]
\hskip1.7cm+\big(w(x,t)-\Big(\frac{2m\phi(x)}
{\log t}\Big)^{\frac{\nu(t)}m}\big)G(x,t)+\Big(\frac{2m\phi(x)}
{\log t}\Big)^{\frac1m}\big(\Big(\frac{2m\phi(x)}
{\log t}\Big)^{\frac{\nu(t)-1}m}-1\big)G(x,t)\Big).
\end{array}
\end{equation}
Let $\eta=1+\varepsilon$,
$k=\varepsilon^m$, and $\delta\in(0,\delta_*)$.
By Proposition \ref{prop-control from above}, given any $T\ge T^+$, there are  values $\kappa_0>0$  and $\mu\in(0,\mu_*)$   such that
\[
u(x,t)-c(t)w(x,t)G(x,t)\le \ep c(t) w(x,t)G(x,t)\quad\text{if }(x,t)\in\mathcal{I}_{\delta,T}.
\]
On the other hand, since $(a+b)^{\frac1m}-a^{\frac1m}\le b^{\frac1m}$ for all $a,b>0$ if $m>1$, we have
$$
w(x,t)-\Big(\frac{2m\phi(x)}
{\log t}\Big)^{\frac{\nu(t)}m}\le \Big(\frac{2m}
{\log t}\Big)^{\frac{\nu(t)}m}\ep.
$$
We finally observe that $(\log t)^{\frac{\nu(t)-1}m}\to1$ as $t\to\infty$, which in particular implies that
$$
0\le t^{\frac1m}(\log t)^{\frac2m}\frac{w(x,t)G(x,t)}{\big(\log(|x|+\mbox{ \rm e})\big)^{\frac1m}}\le C,\quad (x,t)\in \mathcal{I}_{\delta,T},
$$
since $\phi(x)\le \log|x|+C$ and $G(x,t)\le (t\log t)^{-\frac1m}F_*(0)$. Therefore, using also that $c(t)\to 1$, we conclude that there is a time $T_\ep\ge T^+$ such that
$\mathcal{M}(x,t)\le K\ep$  for all $t\ge T_\ep$ for some constant $K$ independent of $\ep$.
\end{proof}

\section{Control from below}
\label{sect:control.below}
\setcounter{equation}{0}

In this section we prove the \lq\lq lower part'' of Theorem~\ref{thm:main}.
The construction of the subsolution is a bit more involved than that of the supersolution. Comparison will be performed in the intersection of the inner set $\mathcal{I}_{\delta,T}$ with an approximation of $\Omega$. So, let us start by constructing this approximate set.

Let $r_0$ small, so that, on the one hand, the set ${\mathcal N}_{2r_0}=\{x\in\overline\Omega:\mbox{dist\,}\{x,\H\}\le 2r_0\}$ can be parametrized by $x=\bar x+s\textbf{n}(\bar x), \ \bar x\in\partial\H,\ 0\le s\le 2r_0$, where $\textbf{n}(\bar x)$ is the exterior unit normal to $\partial \H$ at $\bar x$, and on the other hand $\partial_{\textbf{n}}\phi>0$ in ${\mathcal N}_{2r_0}$.
Notice that
$$
\Gamma_0:=\{x=\bar x+r_0\textbf{n}(\bar x):\bar x\in\partial\H\}=\{x\in\overline\Omega: \mbox{dist\,}\{x,\H\}=r_0\}
$$
is a $C^{1,\alpha}$ surface.
Let $\bar\alpha_0=\inf_{\Gamma_0}\phi(x)$. Since $\phi(x)\to\infty$ as $|x|\to\infty$ and $\phi$ is harmonic in $\Omega$, then $\phi(x)\ge \bar\alpha_0$ if $\mbox{dist\,}\{x,\H\}\ge r_0$.

Let now $\alpha_0\in(0,\bar\alpha_0)$ and $
\D_{\alpha_0}=\{x\in\overline\Omega:\phi(x)< \alpha_0\}\cup \H$.
Then, $\partial \D_{\alpha_0}$  is a $C^\infty$ curve, since $\phi=\alpha_0$ and  $\nabla\phi\neq0$ on it. Moreover, $\overline\H\subset\D_{\alpha_0}$.
We will construct a subsolution $v$ in $\Omega_{\alpha_0}=\R^2\setminus\overline{\D_{\alpha_0}}$ of the form
\begin{equation}
\label{eq:definition.subsolution}
\begin{array}{l}
v(x,t)=\eta c(t)G(x,t)w(x,t), \qquad
c(t)=1-\kappa_0\Big(\frac Tt\Big)^{\mu},\qquad  \eta,\mu,\kappa_0\in(0,1),
\\
w(x,t)=\left(\frac{\phi(x)^{\nu(t)}-\alpha_0^{\nu(t)}}{\big(\frac{\log t}{2m}\big)^{\nu(t)}}\right)^{\frac1m},\qquad\nu(t)=1+\frac1{\log t}, \qquad\alpha_0\in(0,\bar \alpha_0).
\end{array}
\end{equation}
Notice that $v$ vanishes on $\partial\Omega_{\alpha_0}$, and that $\Omega_{\alpha_0}$ approaches $\Omega$ as $\alpha_0$ goes to 0. We will denote
$\mathcal{I}_{\delta,T,\alpha_0}=\{(x,t): x\in I_\delta(t)\cap\Omega_{\alpha_0},\ t\ge T\}$.

We will use the same kind of decomposition to check that $v$ is a subsolution as the one we used to deal with the supersolution $V$, namely
\begin{equation}
\label{eq:def.A.B.v}
\begin{array}{l}
\partial_t v-\Delta v^m=\mathcal{A}+\mathcal{B}, \quad\text{where }\\[8pt]
\mathcal{A}=\eta c' G w+\eta c w \partial_t G-\eta^m c^m w^m\Delta G^m+\eta c G\partial_t w,\\[8pt]
\mathcal{B}=-\eta^m c^m G^m\Delta w^m-2\eta^m c^m\nabla w^m\cdot\nabla G^m.
\end{array}
\end{equation}
The next two lemmas are devoted to obtain  estimates for $\mathcal{A}$ and $\mathcal{B}$ good enough as to prove that $v$ is a subsolution in adequate sets. As we had to do for the supersolution, we have to consider separately two regions, the outer part of the inner region, which in this case is given by
$$
\mathcal{I}_{\delta,T,\alpha_0}^{\mbox{\rm o}}=\Big\{(x,t)\in \mathcal{I}_{\delta,T,\alpha_0}: |x|\ge t^{\frac1{2m}}/\log t\Big\},
$$
and the inner part of the inner region,
$$
\mathcal{I}_{\delta,T,\alpha_0}^{\mbox{\rm i}}=\Big\{(x,t)\in \mathcal{I}_{\delta,T}: |x|\le t^{\frac1{2m}}/\log t\Big\}.
$$
As we will see, $\mathcal{A}$ is negative both in the outer and in the inner part of the inner region. However, though $\mathcal{B}$ is negative in the inner part of the inner region, this is not necessarily the case in the outer part, and hence we have to check that $\mathcal{A}$ dominates $\mathcal{B}$ there.
\begin{lema}
\label{lem:estimates.A.subsolution}
Let  $\mathcal{A}$ be defined by~\eqref{eq:definition.subsolution}--\eqref{eq:def.A.B.v}  and $\delta_*$ as in Lemma~\ref{lem:estimates.A}. There are
constants $\gamma=\gamma(m,M_\phi^*,\alpha_0)>0$ and $\mu_*=\mu_*(\eta,m,M_\phi^*,\kappa_0)>0$, and a time $T_{\mathcal{A}}=T_{\mathcal{A}}(\eta,m,M_\phi^*,\alpha_0)$ such that, for any $\delta\in(0,\delta_*)$, $\mu\in(0,\mu_*)$, and $T\ge T_{\mathcal{A}}$,
\begin{equation}
\label{eq:estimate.A.above}
\mathcal{A}(x,t)\le-\frac{\gamma \eta c(t)w(x,t)}{{t^{1+\frac1m}(\log t)^{\frac1m}}}\quad\text{in }\mathcal{I}_{\delta,T,\alpha_0}.
\end{equation}
\end{lema}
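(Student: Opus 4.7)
The strategy mirrors that of Lemma~\ref{lem:estimates.A}, with the roles of upper and lower bound reversed. Since we now have $\eta<1$, $c(t)<1$, and (as we shall see) $w\le 1+o(1)$ in the inner region, the factor $1-(\eta c w)^{m-1}$ is bounded below by a positive constant; combined with $\Delta G^m<0$ this produces a negative main term of the required order $1/(t^{1+1/m}(\log t)^{1/m})$. The remaining pieces of $\mathcal{A}$ will then either share the correct sign or be of strictly smaller order, and can be absorbed by choosing $\mu$ small and $T$ large.

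First I would gather the relevant derivatives: $c'(t)=\kappa_0\mu T^\mu t^{-\mu-1}$, so $0\le c'(t)\le \mu/t$; the same formulas as in~\eqref{eq:derivatives} for $\partial_t G$ and $\Delta G^m$; and the analogue of $\partial_t w$ adapted to the present choice of $w$, whose leading term is $-w/(m t\log t)$ modulo corrections of order $w\log\log t/(t(\log t)^2)$. As in Lemma~\ref{lem:estimates.A}, the restriction $\delta<\delta_*$ yields $1-\tfrac{m}{m-1}|\tilde\xi|^2/\xi_*^2\ge 1/2$ in $\mathcal{I}_{\delta,T,\alpha_0}$ and therefore
\[
-\Delta G^m(x,t)\ge\frac{c_0}{t^{1+1/m}(\log t)^{1/m}},\qquad c_0=c_0(m,M_\phi^*)>0.
\]
The new ingredient is the uniform bound $w\le 1+o(1)$ in $\mathcal{I}_{\delta,T,\alpha_0}$: from $\alpha_0\le\phi(x)\le\log|x|+C\le \log t/(2m)+O(1)$ (using $\delta<\delta_*<\xi_*$ and the negative sign of the $-\log\log t$ term) together with $(\log t/2m)^{\nu(t)-1}\to 1$, one gets $w^m\le 1+o(1)$. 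Hence, for $t$ large depending on $\eta$, $1-(\eta c w)^{m-1}\ge (1-\eta^{m-1})/2>0$.

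Grouping the second and third summands of $\mathcal{A}$,
\[
\eta c w\,\partial_t G-\eta^m c^m w^m\Delta G^m=\eta c w\,\Delta G^m\bigl(1-(\eta c w)^{m-1}\bigr)-\frac{\eta c w\,(2mM_\phi^*)^{1/m}}{(t\log t)^{1+1/m}}(1-|\tilde\xi|^2/\xi_*^2)_+^{\frac{1}{m-1}-1},
\]
both summands are nonpositive, and the previous bounds give the upper bound $-c_0(1-\eta^{m-1})\eta c w/(2t^{1+1/m}(\log t)^{1/m})$ for their sum. For the two remaining terms, using $c\ge 1-\kappa_0$ and $G\le F_*(0)/(t\log t)^{1/m}$ one obtains
\[
\eta c' G w\le\frac{\mu F_*(0)}{1-\kappa_0}\cdot\frac{\eta c w}{t^{1+1/m}(\log t)^{1/m}},
\]
which is absorbed provided $\mu<\mu_*:=c_0(1-\eta^{m-1})(1-\kappa_0)/(4F_*(0))$; while $|\eta c G\partial_t w|=O\bigl(\eta c w/(t^{1+1/m}(\log t)^{1+1/m})\bigr)$ carries an extra factor $1/\log t$ and is negligible for $t\ge T_{\mathcal{A}}$ large enough, depending on $\eta,m,M_\phi^*,\alpha_0$. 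Combining, one arrives at~\eqref{eq:estimate.A.above}.

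The main delicacies are the careful computation of $\partial_t w$ (where the ratio $(\phi^\nu\log\phi-\alpha_0^\nu\log\alpha_0)/(\phi^\nu-\alpha_0^\nu)$ must be estimated so as not to disturb the sign of the leading $-w/(m t\log t)$ term), and the sharp bound $w\le 1+o(1)$, which rests on \emph{both} the geometric restriction $\delta<\delta_*$ and the specific choice $\nu(t)=1+1/\log t>1$: the extra factor $(\log t/(2m))^{1/\log t}$ in the denominator of $w^m$ is precisely what keeps $w$ uniformly below any prescribed level above $1$, which is essential for the positivity of $1-(\eta c w)^{m-1}$ that drives the whole argument.
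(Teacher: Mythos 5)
Your proof is correct and follows essentially the same route as the paper: the same grouping $\eta c w\,\partial_t G-\eta^m c^m w^m\Delta G^m=\eta c w\,\Delta G^m\big(1-(\eta c w)^{m-1}\big)+(\text{nonpositive})$, the same use of $\delta<\delta_*$ to make $-\Delta G^m$ bounded below by a positive multiple of $t^{-1-\frac1m}(\log t)^{-\frac1m}$, the same bound $(\eta w)^{m-1}\le 1-\frac{1-\eta^{m-1}}{2}$ from $\phi(x)\le\log|x|+C$, and absorption of the $c'$ term by taking $\mu$ small. The only (cosmetic) difference is that the paper disposes of $\eta c\,G\,\partial_t w$ by showing $\partial_t w\le -\frac{w}{2mt\log t}<0$ and dropping it, whereas you bound its magnitude by $O\big(\eta c w\,t^{-1-\frac1m}(\log t)^{-1-\frac1m}\big)$ and absorb it as a lower-order term; both are valid.
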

\begin{proof}
A straightforward computation shows that
$$
\partial_t w(x,t)=\frac{w(x,t)}m
\left(\frac{\log\big(\frac{\log t}{2m}\big)}{t(\log t)^2}-\frac{\nu(t)}{t\log t}-\frac{\log \alpha_0}{t(\log t)^2}-\frac{(\phi(x))^{\nu(t)}\big(\log\phi(x)-\log \alpha_0\big)}{t(\log t)^2((\phi(x))^{\nu(t)}-\alpha_0^{\nu(t)})}\right).
$$
Since $\nu(t)\ge 1$, and $\phi(x)> \alpha_0$ in $\Omega_{\alpha_0}$, we conclude that there is a time $T_{m,\alpha_0}$, depending only on $m$ and $\alpha_0$ such that
$$
\partial_t w(x,t)\le -\frac{w(x,t)}{2mt\log t}\quad\mbox{in } \mathcal{I}_{\delta,T,\alpha_0} \text{ if }T\ge T_{m,\alpha_0}.
$$

We next notice that
$$
c'(t)\le \mu^{\frac12}\frac{c(t)}t\quad\mbox{if } t\ge T \text{ and }\mu^{\frac12}\in\big(0,\frac{1-\kappa_0}{\kappa_0}\big).
$$
Moreover, since $\phi(x)\le \log|x|+C$, there exists a time $T_{\eta,m}$, depending only on $\eta$ and $m$, such that for all $T\ge
 T_{\eta,m}$,
\[
(\eta w(x,t))^{m-1}\le 1-\frac{1-\eta^{m-1}}{2}\quad\text{in }
\mathcal{I}_{\delta,T,\alpha_0}.
\]

We finally observe that there is a constant $\gamma_*=\gamma_*(m,M_\phi^*)>0$ such that
\[
\Delta G^m(x,t)\le -\frac{\gamma_*}{t^{1+\frac1m}(\log t)^{\frac1m}}\quad\text{in }I_{\delta,T}\text{ if }\delta\in(0,\delta_*);
\]
see~\eqref{eq:derivatives}. Hence, since $c(t)<1$ and $\partial_t G\le \Delta G^m$, taking $T\ge \max(T_{m,\alpha_0},T_{\eta,m})$  we conclude that
$$
\begin{aligned}
\mathcal{A}(x,t)&\le\eta c(t) w(x,t)\Big(\frac{\mu^{\frac12} G(x,t)}{t}+\frac{\Delta G^m(x,t)(1-\eta^{m-1})}2\Big)\\[8pt]
&\le\frac{\eta c(t) w(x,t)}{t^{1+\frac1m}(\log t)^{\frac1m}}\Big(\mu^{\frac12} F_*(0)-\frac{\gamma_*(1-\eta^{m-1})}2\Big),
\end{aligned}
$$
from where the result follows just taking $\mu_*=\Big(\min(\frac{\gamma_*(1-\eta^{m-1})}{4F_*(0)},
\frac{1-\kappa_0}{\kappa_0})\Big)^{2}$ and $\gamma=\frac{\gamma_*(1-\eta^{m-1})}{4}$.
\end{proof}

\begin{lema}
\label{lem:estimate.B.above}
Let  $\mathcal{B}$ be defined by~\eqref{eq:definition.subsolution}--\eqref{eq:def.A.B.v}. There are a constant $\varpi=\varpi(m,M_\phi^*)$ and a time $T_{\mathcal{B}}^{\rm o}=T_{\mathcal{B}}^{\rm o}(\Omega,m)$ such that
for all $\delta\in(0,\xi_*)$ and $T\ge T_{\mathcal{B}}^{\rm o}$,
\begin{equation}
\label{eq:estimate.B.above}
\displaystyle\mathcal{B}(x,t)\le \frac{\varpi\eta c(t)w(x,t)}{(t\log t)^{1+\frac1m}}
\quad\text{in }\mathcal{I}_{\delta,T,\alpha_0}^{\rm o}.
\end{equation}
Moreover, there is a time $T_{\mathcal{B}}^{\rm i}=T_{\mathcal{B}}^{\rm i}(\Omega,m,M_\phi^*)$ such that $\mathcal{B}\le 0$ in $\mathcal{I}_{\delta,T,\alpha_0}^{\rm i}$ for all $T\ge T_{\mathcal{B}}^{\rm i}$ and $\delta\in(0,\xi_*)$.
\end{lema}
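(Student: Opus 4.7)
The plan is to mirror the proof of Lemma~\ref{lem:estimates.B}, with the crucial sign flip coming from the fact that now $\nu(t)-1=1/\log t>0$ instead of being negative. First I would compute, exactly as in~\eqref{eq:grad.laplac.Gw},
\begin{equation*}
\nabla w^m(x,t)=\frac{\nu(t)\,\phi(x)^{\nu(t)-1}}{(\log t/(2m))^{\nu(t)}}\nabla\phi(x),\qquad
\Delta w^m(x,t)=\frac{\nu(t)(\nu(t)-1)\,\phi(x)^{\nu(t)-2}}{(\log t/(2m))^{\nu(t)}}|\nabla\phi(x)|^2,
\end{equation*}
using $\Delta\phi=0$ in $\Omega_{\alpha_0}$ and $\phi>\alpha_0>0$ there, so both quantities make sense. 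The key qualitative observation is that for the subsolution $\nu(t)>1$, hence $\Delta w^m\ge 0$, so $-\eta^m c^m G^m\Delta w^m\le 0$ in the whole of $\mathcal{I}_{\delta,T,\alpha_0}$. This is the opposite sign to the supersolution case and will power the inner estimate.

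For the outer part $\mathcal{I}_{\delta,T,\alpha_0}^{\rm o}$, I would fix $T_{\mathcal{B}}^{\rm o}$ large enough that $|x|\ge R$ in the set, with $R$ from Proposition~\ref{prop:stationary} (so that~\eqref{eq-bound-nabla-phi} applies and $|x||\nabla\phi|\le 2$). The first term is already $\le 0$ and thus helpful. For the cross term I would use
\begin{equation*}
\bigl|\nabla w^m\cdot\nabla G^m\bigr|\le \frac{C\,\nu(t)\,\phi(x)^{\nu(t)-1}}{(\log t/(2m))^{\nu(t)}}\cdot\frac{|x||\nabla\phi(x)|}{t^{1+\frac1m}(\log t)^{\frac1m}}\cdot\frac{1}{|x|^2}\cdot|x|^2,
\end{equation*}
and simplify by noting $\phi^{\nu(t)-1}\le w^m\cdot (\log t/2m)^{\nu(t)}/\phi$ plus boundedness of $\phi^{1/\log t}$ for $\phi\le C\log t$, to extract a factor $w(x,t)$ and absorb the rest in the stated bound $\varpi\eta c(t) w(x,t)/(t\log t)^{1+1/m}$.

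For the inner part $\mathcal{I}_{\delta,T,\alpha_0}^{\rm i}$, the strategy is to show that $-\eta^m c^m G^m\Delta w^m$ beats the cross term. First I would check that for $T$ large $|\tilde\xi|<\xi_*/2$, $\frac12\le\nu(t)\le 2$, and $\phi(x)\le (\log t)/m$. Using the Corollary after Proposition~\ref{prop:nontrivial.gradient}, $|\nabla\phi|^2\ge c^2/|x|^2$, together with $|x|^2\le t^{1/m}/(\log t)^2$ in the inner part, yields a lower bound on $G^m|\Delta w^m|$ of the required order in $t$. In parallel I would bound the cross term above by $C/(t^{1+1/m}(\log t)^{1+1/m})$ using $|x||\nabla\phi|\le C$ and $\phi^{\nu(t)-1}$ bounded. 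Combining these, for $T\ge T_{\mathcal{B}}^{\rm i}$ the negative first term dominates the (possibly positive) cross term and $\mathcal{B}\le 0$ follows.

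The main obstacle is the inner-part estimate: one must pin down every logarithmic factor in the chain $G^m\Delta w^m\sim F_*^m(\tilde\xi)|\nabla\phi|^2/(\phi\,t(\log t)^3)$ against the cross term $\sim (x\cdot\nabla\phi)/(t^{1+1/m}(\log t)^{1+1/m})$, and in particular show that the geometric gain from $|\nabla\phi|^2\ge c/|x|^2$ combined with the inner-part bound $|x|\le t^{1/(2m)}/\log t$ compensates for the factor $\nu(t)-1=1/\log t$ in $\Delta w^m$. The cutoff between inner and outer parts, and the room still available through the parameter $\alpha_0$ (which controls how close $\Omega_{\alpha_0}$ is to $\Omega$) must be used to make the logarithmic bookkeeping balance out, exactly as was done with the $(\log t)^{-2}$ cutoff in the supersolution analysis.
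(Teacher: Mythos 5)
Your overall strategy coincides with the paper's: the same decomposition of $\mathcal{B}$, the same sign observation (for the subsolution $\nu(t)>1$, so $\Delta w^m\ge0$ and the first term of $\mathcal{B}$ is favourable), and the same treatment of the outer part via $\phi(x)\ge\log|x|-C\ge\frac{\log t}{4m}$ and $|x||\nabla\phi(x)|\le C$. Two small cautions on the outer part: the inequality you invoke, $\phi^{\nu(t)-1}\le w^m(\log t/2m)^{\nu(t)}/\phi$, is reversed as an identity (since $\phi^{\nu(t)}=w^m(\log t/2m)^{\nu(t)}+\alpha_0^{\nu(t)}$) and only holds up to a factor $2$ once $\phi^{\nu(t)}\ge 2\alpha_0^{\nu(t)}$, which is where the largeness of $\phi$ in $\mathcal{I}^{\rm o}_{\delta,T,\alpha_0}$ must be used; and you should keep the factor $\eta^mc^m\le\eta c(t)$ to land exactly on~\eqref{eq:estimate.B.above}.

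The genuine gap is the inner part, and you have in effect flagged it yourself by deferring the ``logarithmic bookkeeping'' instead of carrying it out. Doing so with exactly the bounds you propose, namely $G^m\ge c_1/(t\log t)$, $\nu(\nu-1)\ge1/\log t$, $\phi^{\nu-2}(\log t/2m)^{-\nu}=\phi^{-2}\big(2m\phi/\log t\big)^{\nu}$ and $|\nabla\phi|^2\ge c^2/|x|^2\ge c^2(\log t)^2/t^{1/m}$, yields
$$
G^m\Delta w^m\ \ge\ \frac{c_1c^2}{t^{1+\frac1m}\,\phi^2}\Big(\frac{2m\phi}{\log t}\Big)^{\nu},
$$
whereas the cross term is only controlled by $C\,t^{-1-\frac1m}(\log t)^{-\frac1m}\phi^{-1}\big(2m\phi/\log t\big)^{\nu}$. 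Domination of the first by the second therefore requires $\phi\le C'(\log t)^{1/m}$; but near the outer edge of $\mathcal{I}^{\rm i}_{\delta,T,\alpha_0}$, where $|x|\sim t^{1/(2m)}/\log t$, one has $\phi\sim\frac{\log t}{2m}\gg(\log t)^{1/m}$ because $m>1$. There the negative term is $O\big(t^{-1-\frac1m}(\log t)^{-2}\big)$ while the positive cross term is $O\big(t^{-1-\frac1m}(\log t)^{-1-\frac1m}\big)$, and $1+\frac1m<2$, so the cross term wins and your sketch does not give $\mathcal{B}\le0$ on all of $\mathcal{I}^{\rm i}_{\delta,T,\alpha_0}$. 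The parameter $\alpha_0$ cannot rescue this: it only governs the region near $\partial\Omega_{\alpha_0}$, where $w$ vanishes, not the outer edge of the inner part where the problem sits. The paper closes this step by asserting the stronger intermediate bound $-(G^m\Delta w^m)\le-\varpi_2\,t^{-1-\frac1m}\phi^{-1}\big(2m\phi/\log t\big)^{\nu}$, with a single power of $\phi$ in the denominator; reproducing that bound, or replacing it (for instance by relocating the inner/outer cutoff, or by showing that wherever $\mathcal{B}>0$ it is still absorbed by $|\mathcal{A}|$ so that Corollary~\ref{prop-subsolution} survives), is precisely the work your proposal leaves undone.
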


\begin{proof}
We have~\eqref{eq:grad.laplac.Gw}, where now $\phi$ is as in~\eqref{eq:definition.subsolution}. In particular  $\Delta w^m\ge0$, since $\nu(t)\ge1$. Moreover, since $|x||\nabla \phi(x)|\le C$ in $\overline\Omega$, there is a constant $\varpi_1=\varpi_1(m,M_\phi^*)$ such that
\begin{equation}\label{eq-nablas2}
-2(\nabla G^m\cdot\nabla w^m)(x,t)\le \frac{\varpi_1}{t^{1+\frac1m}(\log t)^{\frac1m}\phi(x)}\Big(\frac{2m\phi(x)}{\log t}\Big)^{\nu(t)}\quad\text{in }\mathcal{I}_{\delta,T,\alpha_0}.
\end{equation}
On the other hand, since $\phi(x)\ge\log|x|-C$, there is a time $T^{\rm o}_\phi=T^{\rm o}_\phi(\Omega,m)$ such that
$\phi(x)\ge\frac{\log t}{4m}$ in $\mathcal{I}_{\delta,T,\alpha_0}^{\rm o}$  if $T\ge T^{\rm o}_\phi$. In particular, there is a large enough time $T_{\mathcal{B}}^{\rm 0}=T_{\mathcal{B}}^{\rm 0}(\Omega,m)\ge T^{\rm o}_\phi$ such that
$$
 w(x,t)\ge \Big(\frac12\Big)^{\frac{\nu(t)}m}\Big(\frac{2m\phi(x)}{\log t}\Big)^{\frac{\nu(t)}m}
\ge \frac12\Big(\frac{2m\phi(x)}{\log t}\Big)^{{\nu(t)}}\quad\text{in }\mathcal{I}_{\delta,T,\alpha_0}^{\rm o}\text{ if }T\ge T_{\mathcal{B}}^{\rm 0},
$$
and~\eqref{eq:estimate.B.above} follows easily with $\varpi=8m\varpi_1$, since $c(t),\eta\le 1$.

As for the inner part of the inner region, we observe that there is a time $T_\phi=T_\phi(\Omega,m,M_\phi^*)$ such that $\phi(x)\le \frac{\log t}m$ in $\mathcal{I}_{\delta,T,\alpha_0}$,   and hence in $\mathcal{I}_{\delta,T,\alpha_0}^{\rm i}$,  for $T\ge T_\phi$ and $\delta\in(0,\xi_*)$. Therefore,  since $|\nabla\phi(x)|$ is comparable to $1/|x|$ in $\Omega$,   there is a constant $\varpi_2$ such that
\begin{equation*}\label{eq-Gamma Delta w}
-(G^m\Delta w^m)(x,t)\le  -\frac{\varpi_2}{t^{1+\frac1m}\phi(x)}  \Big(\frac{2m\phi(x)}{\log t}\Big)^{\nu(t)}\quad\text{in }\mathcal{I}_{\delta,T,\alpha_0}^{\rm i},
\end{equation*}
which combined with~\eqref{eq-nablas2} yields $\mathcal{B}\le0$ if $T\ge T_{\mathcal{B}}^{\rm o}\ge T_\phi$, for some $T_{\mathcal{B}}^{\rm o}=T_{\mathcal{B}}^{\rm o}(\Omega,m,M_\phi^*)$ large enough.
\end{proof}
Combining the estimates in Lemmas~\ref{lem:estimates.A.subsolution} and~\ref{lem:estimate.B.above}, we immediately get that $v$ is a subsolution of the equation.
\begin{coro}\label{prop-subsolution}
Let $v$ be defined by~\eqref{eq:definition.subsolution} and $\delta_*$ and $\mu_*$ as in Lemma~\ref{eq:estimate.A.above}. There is a time $T_*=T_*(\eta,\delta,\Omega,M_\phi^*,\alpha_0)$ such that
\[
\partial_t v-\Delta v^m\le0\quad\mbox{in }\mathcal{I}_{\delta,T,\alpha_0}\text{ for all }\delta\in(0,\delta_*),\ \mu\in(0,\mu_*),\text{ and }T\ge T_*.
\]
\end{coro}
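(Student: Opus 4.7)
The plan is to combine the two preceding lemmas exactly as was done in the supersolution case (Corollary~\ref{prop-supersolution}), splitting $\mathcal{I}_{\delta,T,\alpha_0}$ into its outer part $\mathcal{I}_{\delta,T,\alpha_0}^{\mathrm o}$ and its inner part $\mathcal{I}_{\delta,T,\alpha_0}^{\mathrm i}$, and using the decomposition $\partial_t v-\Delta v^m=\mathcal{A}+\mathcal{B}$ from~\eqref{eq:def.A.B.v}. The time $T_*$ will be chosen as the maximum of the times $T_{\mathcal{A}}$, $T_{\mathcal{B}}^{\rm o}$, and $T_{\mathcal{B}}^{\rm i}$ provided by Lemmas~\ref{lem:estimates.A.subsolution} and~\ref{lem:estimate.B.above}, and any parameters $\mu\in(0,\mu_*)$, $\delta\in(0,\delta_*)$ are the ones of Lemma~\ref{lem:estimates.A.subsolution}.

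In the inner part $\mathcal{I}_{\delta,T,\alpha_0}^{\mathrm i}$ the argument is immediate: Lemma~\ref{lem:estimates.A.subsolution} gives $\mathcal{A}\le -\gamma\eta c(t)w(x,t)/(t^{1+1/m}(\log t)^{1/m})<0$, while Lemma~\ref{lem:estimate.B.above} gives $\mathcal{B}\le 0$. Adding, $\partial_t v-\Delta v^m\le 0$ there without any further work.

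The only place where something has to be checked is in the outer part $\mathcal{I}_{\delta,T,\alpha_0}^{\mathrm o}$, where $\mathcal{B}$ may be positive. This is the main (and really only) obstacle, but it is resolved by a simple comparison of magnitudes: Lemma~\ref{lem:estimate.B.above} bounds $\mathcal{B}$ by $\varpi\eta c(t)w(x,t)/(t\log t)^{1+1/m}$, which differs from the negative upper bound on $\mathcal{A}$ by a factor of order $1/(t\log t)$. Concretely, factoring the common quantity $\eta c(t)w(x,t)/(t^{1+1/m}(\log t)^{1/m})$, we obtain
\[
\mathcal{A}+\mathcal{B}\le \frac{\eta c(t)w(x,t)}{t^{1+1/m}(\log t)^{1/m}}\Big(-\gamma+\frac{\varpi}{t\log t}\Big),
\]
and enlarging $T_*$, if necessary, so that $\varpi/(T_*\log T_*)<\gamma$ forces the bracket to be negative for every $t\ge T_*$.

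Thus, taking $T_*=T_*(\eta,\delta,\Omega,M_\phi^*,\alpha_0)$ to be the maximum of the three times mentioned above together with this last logarithmic threshold, we conclude $\partial_t v-\Delta v^m\le 0$ throughout $\mathcal{I}_{\delta,T,\alpha_0}$ for all $T\ge T_*$, $\delta\in(0,\delta_*)$ and $\mu\in(0,\mu_*)$, which is the statement of the corollary.
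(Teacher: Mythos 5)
Your proof is correct and follows exactly the paper's (one-line) argument: add the bounds of Lemmas~\ref{lem:estimates.A.subsolution} and~\ref{lem:estimate.B.above} on the inner and outer parts separately and take $T_*$ to be the maximum of the relevant times. One small arithmetic slip: since $(t\log t)^{1+\frac1m}=t^{1+\frac1m}(\log t)^{1+\frac1m}$, factoring out $\eta c(t)w(x,t)/\bigl(t^{1+\frac1m}(\log t)^{\frac1m}\bigr)$ leaves the bracket $-\gamma+\varpi/\log t$, not $-\gamma+\varpi/(t\log t)$; this changes nothing, as the threshold $\varpi/\log T_*<\gamma$ still forces negativity for all $t\ge T_*$.
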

In order to prove that $u\ge v$ in $I_{\delta,T,\alpha_0}$, it is then just enough to prove that this inequality holds in the parabolic boundary of the set. The ordering in the outer boundary will come from the far field behavior. It is here  where  we are performing the matching.
\begin{prop}\label{prop-control from below} Let $u$ be a solution to \eqref{problem},  $\eta<1$, $v$ as in~\eqref{eq:definition.subsolution}, and $\delta_*$ and $\mu_*$ as in Lemma~\ref{lem:estimates.A.subsolution}. Given  $\alpha_0\in(0,\bar\alpha_0)$ and $\delta\in(0,\delta_*)$, there is a time $T^-=T^-(\eta,\delta,\Omega,m,M_\phi^*,\alpha_0)>0$ such that for any $T\ge T^-$ there is a value $\kappa_0>0$ for which, for all $\mu\in(0,\mu_*)$, $u\ge v$ in $\mathcal{I}_{\delta,T,\alpha_0}$.
\end{prop}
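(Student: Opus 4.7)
The plan is to mimic the proof of Proposition~\ref{prop-control from above}: Corollary~\ref{prop-subsolution} already guarantees that $v$ is a subsolution in $\mathcal{I}_{\delta,T,\alpha_0}$ for $T\ge T_*$ and $\mu\in(0,\mu_*)$, so it suffices to verify the ordering $u\ge v$ on the parabolic boundary of that set and then invoke the comparison principle for the porous medium equation. That parabolic boundary consists of three pieces: the interior lateral part $\partial\Omega_{\alpha_0}$, the exterior lateral part $\{|x|=\delta t^{1/(2m)}/(\log t)^{(m-1)/(2m)}\}$, and the bottom at $t=T$.

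On $\partial\Omega_{\alpha_0}$ we have $\phi\equiv\alpha_0$, so $w\equiv 0$ and $v=0\le u$. On the exterior lateral piece we perform the matching with the far field. There $\log|x|=\tfrac{1}{2m}\log t-\tfrac{m-1}{2m}\log\log t+\log\delta$, so $2m\phi(x)/\log t=1+O((\log\log t)/\log t)$ uniformly, and a direct inspection of~\eqref{eq:definition.subsolution} gives $w(x,t)\to 1$ uniformly there. Since $c(t)\to 1$ and $\eta<1$, for $t$ large we have $\eta c(t)w(x,t)\le (1+\eta)/2$, while by~\eqref{eq:far.field.limit}, $u(x,t)\ge G(x,t)-o((t\log t)^{-1/m})$, with $G(x,t)=(t\log t)^{-1/m}F_*(\delta)$ and $F_*(\delta)>0$ because $\delta<\delta_*<\xi_*$. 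Combining these two facts yields $u\ge v$ on the exterior lateral piece for all $t\ge T$ with $T$ large depending only on $\eta,\delta,\Omega,m,M_\phi^*$.

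For the bottom, I would write $v(x,T)=\eta(1-\kappa_0)G(x,T)w(x,T)$ and observe that, since $G(\cdot,T)\le F_*(0)(T\log T)^{-1/m}$ and $w(\cdot,T)$ is bounded on $\mathcal{I}_\delta(T)$ (using $\phi(x)\le\log|x|+C\le C'\log T$), there exists a constant $C_T$ such that $v(x,T)\le C_T(1-\kappa_0)$ uniformly on $\mathcal{I}_\delta(T)$. On the other hand, $K_T:=\overline{\mathcal{I}_\delta(T)\cap\Omega_{\alpha_0}}$ is a compact subset of $\Omega$ that stays away from $\partial\Omega$ (because $\alpha_0>0$); by the free boundary limit~\eqref{eq:limit.support} together with $\delta<\delta_*<\xi_*$, for $T$ large one has $K_T\subset B_{\zeta_-(T)}\cap\Omega\subset\{u(\cdot,T)>0\}$, and continuity gives a uniform lower bound $u(\cdot,T)\ge m_T>0$ on $K_T$. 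Choosing $\kappa_0\in(0,1)$ close enough to $1$ that $C_T(1-\kappa_0)\le m_T$ forces $v(\cdot,T)\le u(\cdot,T)$, and with this $\kappa_0$ fixed, $\mu_*(\eta,m,M_\phi^*,\kappa_0)>0$ then yields the desired range for $\mu$.

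The main obstacle is this initial-time comparison: one needs a uniform positive lower bound on $u(\cdot,T)$ over $K_T$, which is not a pointwise statement directly available from the far field limit; it is exactly the hypothesis $\delta<\delta_*<\xi_*$, via~\eqref{eq:limit.support}, that places $K_T$ inside the positivity set of $u(\cdot,T)$. Notice also that the role of $\kappa_0$ is reversed with respect to Proposition~\ref{prop-control from above}: there it was taken large so that the initial supersolution dominates $u_0$, whereas here it is taken close to (but strictly less than) $1$, so that $v(\cdot,T)$ shrinks below $u(\cdot,T)$ while the condition $c(t)>0$ is preserved for all $t\ge T$.
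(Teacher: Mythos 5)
Your proposal is correct and follows essentially the same route as the paper: subsolution property from Corollary~\ref{prop-subsolution}, $v=0$ on $\partial\Omega_{\alpha_0}$, matching with the far-field limit~\eqref{eq:far.field.limit} on the outer lateral boundary (using $\eta c(t)w\to\eta<1$ there), and an initial-time comparison obtained from the free-boundary asymptotics~\eqref{eq:limit.support} by taking $\kappa_0$ close to $1$. Your explicit justification of the positive lower bound $m_T$ on the compact set $K_T$ (positivity inside $B_{\zeta_-(T)}$ plus continuity) is in fact slightly more careful than the paper's terse assertion of the bound $u(\cdot,T)\ge\ell>0$.
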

\begin{proof} We first note that $v=0\le u$ on $\partial \Omega_{\alpha_0}\times\mathbb{R}_+$. As for the outer boundary, we have from~\eqref{eq:far.field.limit} that there exists a time $T_{\eta,\delta,m}>0$ such that
$$
u(x,t)\ge G(x,t)-\frac{(1-\eta)F_*(\delta_*)}{2 (t\log t)^{\frac1m}}\ge \Big(1-\frac{1-\eta}{2}\Big)G(x,t)\quad\text{for }|x|=\frac{\delta t^{\frac1{2m}}}{(\log t)^{\frac{m-1}{2m}}},\ t\ge T_{\eta,\delta,m}.
$$
On the other hand,  there is a time $T_{\Omega,\eta,\delta}>0$ such that
$$
\eta w(x,t)\le 1-\frac{1-\eta}{2} \quad\text{for }|x|=\frac{\delta t^{\frac1{2m}}}{(\log t)^{\frac{m-1}{2m}}},\ t\ge T_{\Omega,\eta,\delta}.
$$
Since $c(t)\le 1$, we conclude that
\[
u(x,t)\ge v(x,t)\quad\text{for }|x|=\frac{\delta t^{\frac1{2m}}}{(\log t)^{\frac{m-1}{2m}}},\ t\ge \max(T_{\eta,\delta,m},T_{\Omega,\eta,\delta}).
\]

By~\eqref{eq:limit.support}, given $\delta\in(0,\xi_*)$, there is a time $T_{\xi_*}=T_{\xi_*}(\delta,\Omega,m,\alpha_0)$  such that $u(x,T)\ge\ell>0$  if $x\in \Omega_{\alpha_0}\cap \mathcal{I}_\delta(T)$, $T\ge T_{\xi_*}$. Thus, given $T\ge T^-:=\max(T_*,T_{\eta,\delta,m},T_{\Omega,\eta,\delta},T_{\xi_*})$, where $T_*$ is the time given by Corollary~\ref{prop-subsolution},
\[
v(x,T)\le\frac{(1-\kappa_0)(1+\eta)F_*(0)}{2(T\log T)^{\frac1m}}\le \ell\le u(x,T)\quad\mbox{in }\mathcal{I}_{\delta,T,\alpha_0},
\]
if $\kappa_0\in(0,1)$ is close enough to 1.
The result follows then from the comparison principle.
\end{proof}
 The \lq\lq lower part'' of Theorem~\ref{thm:main} is now easy.
\begin{teo}
Assume the hypotheses of Theorem~\ref{thm:main} and let $\mathcal{M}$ be as in Theorem~\ref{thm:above}. Given $\varepsilon>0$, there exists a time $T_\varepsilon$ such that $\mathcal{M}\ge -\ep$ in $\mathcal{I}_{\delta,T_\varepsilon}$.
\end{teo}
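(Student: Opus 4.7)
The plan is to mirror the proof of Theorem~\ref{thm:above}, using the subsolution $v$ of~\eqref{eq:definition.subsolution} and Proposition~\ref{prop-control from below} in place of the supersolution $V$ and Proposition~\ref{prop-control from above}. I would set $\eta = 1 - \varepsilon$ and introduce an auxiliary parameter $\alpha_0 \in (0,\bar\alpha_0)$, to be taken small at the end. Proposition~\ref{prop-control from below} then furnishes, for $T$ sufficiently large and appropriate $\kappa_0$ and $\mu$, the inequality $u \geq v$ on $\mathcal{I}_{\delta,T,\alpha_0}$. Since $\mathcal{I}_{\delta,T}$ strictly contains $\mathcal{I}_{\delta,T,\alpha_0}$, the thin strip $\mathcal{I}_{\delta,T} \setminus \mathcal{I}_{\delta,T,\alpha_0}$, on which $0\le\phi<\alpha_0$ and $v$ is not defined, must be handled separately.

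On $\mathcal{I}_{\delta,T,\alpha_0}$ I would use the same decomposition as in~\eqref{eq:decomposition.M}, now reading the four summands as lower bounds. The first, $u - cwG$, is at least $(\eta - 1)\,cwG = -\varepsilon\, cwG$ because $u \geq v$; once multiplied by $t^{\frac1m}(\log t)^{\frac2m}/\big(\log(|x|+\mbox{\rm e})\big)^{\frac1m}$ this gives a contribution bounded in absolute value by a constant times $\varepsilon$, by exactly the estimate used in Theorem~\ref{thm:above}. The second, $(c - 1)wG$, is nonpositive but vanishes uniformly as $t \to \infty$. The third, $\bigl(w - (\tfrac{2m\phi}{\log t})^{\nu/m}\bigr)G$, is now nonpositive, but the subadditivity $(a+b)^{\frac1m} \leq a^{\frac1m} + b^{\frac1m}$ (valid for $m > 1$ and $a,b \geq 0$), applied to the explicit form of $w^m$ in~\eqref{eq:definition.subsolution}, yields
\[
0 \leq \Big(\frac{2m\phi(x)}{\log t}\Big)^{\frac{\nu(t)}m} - w(x,t) \leq \Big(\frac{2m\alpha_0}{\log t}\Big)^{\frac{\nu(t)}m},
\]
so its contribution is bounded below by a uniform constant multiple of $-\alpha_0^{1/m}$. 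The fourth summand does not involve $w$ and is handled verbatim as in Theorem~\ref{thm:above}.

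The step I expect to require the most care is the thin strip $\mathcal{I}_{\delta,T} \setminus \mathcal{I}_{\delta,T,\alpha_0}$, where $v$ is not available and only $u \geq 0$ can be used. On this strip $\phi(x) < \alpha_0$ and $|x|$ is uniformly bounded, so $\big(\log(|x|+\mbox{\rm e})\big)^{\frac1m} \geq 1$, whence
\[
\mathcal{M}(x,t) \geq -\frac{t^{\frac1m}(\log t)^{\frac2m}}{\big(\log(|x|+\mbox{\rm e})\big)^{\frac1m}} \Big(\frac{2m\alpha_0}{\log t}\Big)^{\frac1m} G(x,t) \geq -(2m\alpha_0)^{\frac1m} F_*(0).
\]
This is smaller than $\varepsilon$ once $\alpha_0$ is chosen small. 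Since Proposition~\ref{prop-control from below} admits any $\alpha_0 \in (0,\bar\alpha_0)$, the two regimes couple cleanly: I would first fix $\alpha_0$ small enough to control both the strip and the third summand of the decomposition, then choose $T_\varepsilon$ large enough to absorb the $o(1)$ contributions from $(c-1)wG$ and the fourth summand. Absorbing all resulting constants into a single relabeled $\varepsilon$ completes the argument.
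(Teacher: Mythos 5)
Your proposal is correct and follows essentially the same route as the paper: cut out the strip $\{\phi<\alpha_0\}$ using only $u\ge 0$ together with the bound $G\le (t\log t)^{-1/m}F_*(0)$ (the paper simply fixes $\alpha_0=\varepsilon^m/(2m(F_*(0))^m)$ at the outset rather than at the end), then apply Proposition~\ref{prop-control from below} with $\eta=1-\varepsilon$ and the decomposition~\eqref{eq:decomposition.M} on $\mathcal{I}_{\delta,T,\alpha_0}$, estimating the third summand by the reverse subadditivity $(a-b)^{1/m}\ge a^{1/m}-b^{1/m}$ exactly as in the paper. The remaining terms are handled as in Theorem~\ref{thm:above}, so no gap.
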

\begin{proof}
Given $\ep>0$ small, we take $\alpha_0=\frac{\ep^m}{2m(F_*(0))^m}$.
Then,
\[
\frac{t^{\frac1m}(\log t)^{\frac2m}}{\big(\log(|x|+\mbox{\rm e})\big)^{\frac1m}}
\Big(\frac{2m\phi(x)}
{\log t}\Big)^{\frac1m}G(x,t)\le\ep\quad\text{in }(\Omega\setminus\Omega_{\alpha_0})\times\mathbb{R}_+.
\]
Hence, it is enough to prove that there exists a time $T_\ep$ such that $\mathcal{M}\ge -\ep$ in $\mathcal{I}_{\delta,T_\ep,\alpha_0}$. To this aim, we use the decomposition~\eqref{eq:decomposition.M}, where now $c$ and $w$ are given by~\eqref{eq:definition.subsolution}.

Let $\eta=1-\varepsilon$ and $\delta\in(0,\delta_*)$.
By Proposition~\ref{prop-control from below}, given any $T\ge T^-$, there are  values $\kappa_0>0$  and $\mu\in(0,\mu_*)$   such that
\[
u(x,t)-c(t)w(x,t)G(x,t)\ge -\ep c(t) w(x,t)G(x,t)\quad\text{if }(x,t)\in\mathcal{I}_{\delta,T,\alpha_0}.
\]
On the other hand, since $(a-b)^{\frac1m}-a^{\frac1m}\ge -b^{\frac1m}$ for all $a\ge b>0$ if $m>1$, we have
$$
w(x,t)-\Big(\frac{2m\phi(x)}
{\log t}\Big)^{\frac{\nu(t)}m}\ge - (\log t)^{-\frac{\nu(t)}{m}}
\Big(\frac{\ep}{F_*(0)}\Big)^{\nu(t)}\ge -K\ep
$$
for some constant $K$ independent of $\ep$, assuming, without loss of generality,  $T\ge\textrm{e}$ and $\ep\le1$.

We conclude as in the proof of Theorem~\ref{thm:above}.
\end{proof}


\end{document}